\newtheorem{thm}{Theorem}
\newtheorem{lem}{Lemma}
\newcommand{\diag}{{\mathrm{diag}}}
\newcommand{\F}{\mathbbmss{F}}
\newcommand{\splitext}{\,\colon\!}
\newcommand{\arbitraryext}{\,\ldotp}
\newcommand{\GL}{{\mathrm{GL}}}
\newcommand{\PSL}{\mathop{\rm PSL}\nolimits}           
\newcommand{\PSp}{\mathop{\rm PSp}\nolimits}           
\newcommand{\SL}{\mathop{\rm SL}\nolimits}           
\newcommand{\SU}{\mathop{\rm SU}\nolimits}             
\newcommand{\PSU}{\mathop{\rm PSU}\nolimits}  
\newcommand{\PGU}{\mathop{\rm PGU}\nolimits}
\newcommand{\PGL}{{\mathrm{PGL}}}
\renewcommand{\P}{{\rm P}}
\newcommand{\GU}{\mathop{\rm GU}\nolimits}
\newcommand{\Aut}{\operatorname{Aut}}
\newcommand{\Out}{\operatorname{Out}}
\renewcommand{\unlhd}{\trianglelefteqslant}
\newcommand{\ld}{\mathop{\ldbrack}}
\newcommand{\rd}{\mathop{\rdbrack}}
\newcommand{\D}{\mathcal{D}}
\renewcommand{\C}{\mathcal{C}}
\newcommand{\E}{\mathcal{E}}
\renewcommand{\U}{\mathcal{U}}
\renewcommand{\leftthreetimes}{}
\newcommand{\lcm}{\mathrm{lcm}}
\begin{document}

\title{The Hall property $\D_\pi$ is inherited by overgroups of
$\pi$-Hall subgroups\thanks{The work is supported by  Russian Science
Foundation (project 14-21-00065)} } 


\author{Nomina Ch. Manzaeva  \\ Novosibirsk State University, Novosibirsk, Russian
Federation \\
Sobolev Institute of Mathematics, Novosibirsk, Russian Federation \\
              {manzaeva@mail.ru}      \and
        Danila O. Revin \\   Sobolev Institute of Mathematics, Novosibirsk, Russian Federation \\
Novosibirsk State University, Novosibirsk, Russian Federation \\
{revin@math.nsc.ru}    \and
        Evgeny P. Vdovin Sobolev\\ Institute of Mathematics, Novosibirsk, Russian Federation \\
Novosibirsk State University, Novosibirsk, Russian Federation \\
{vdovin@math.nsc.ru}
}


\maketitle

\begin{abstract}
 Let $\pi$ be a set of primes. We say that a finite group $G$ is a
$\D_\pi$-group if the maximal $\pi$-subgroups of $G$ are conjugate. In this
paper, we give an affirmative answer to Problem 17.44(b) from ``Kourovka
notebook'', namely we prove that in a $\D_\pi$-group an overgroup of a
$\pi$-Hall subgroup is always a $\D_\pi$-group. 
\end{abstract}


\section{Introduction}

Throughout $G$~is  a finite group, and $\pi$~ is a set of primes.  We denote
by $\pi'$ the set of all primes not in $\pi$, by $\pi(n)$ the set of all
prime divisors of a positive integer~$n$, given a group $G$ we denote
$\pi(|G|)$ by $\pi(G)$. A natural  number $n$ with $\pi(n)\subseteq \pi$ is
called a {\em $\pi$-number}, while  a group $G$ with $\pi(G)\subseteq \pi$ is
called a {\it $\pi$-group}. A subgroup $H$ of $G$ is called a {\it $\pi$-Hall
subgroup}, if $\pi(H)\subseteq \pi$ and $\pi(|G:H|)\subseteq \pi'$, i.e. the
order of $H$ is a $\pi$-number and the index of $H$ is a $\pi'$-number.

Following \cite{Hall1956}, we say that $G$ {\it satisfies $\E_\pi$} (or
briefly $G\in \E_\pi$), if $G$ has a $\pi$-Hall subgroup. If $G$ satisfies
$\E_\pi$ and every two $\pi$-Hall subgroups of $G$ are conjugate, then we say
that $G$ {\it satisfies $\C_\pi$} ($G\in \C_\pi$). Finally, $G$ {\it
satisfies~$\D_\pi$} ($G\in \D_\pi$), if $G$ satisfies $\C_\pi$ and every
$\pi$-subgroup of $G$ is included in a $\pi$-Hall subgroup of~$G$. Thus $G\in
\D_\pi$ if a complete analogue of the Sylow theorems for $\pi$-subgroups of
$G$ holds. Moreover, the Sylow theorems imply that $G\in\D_\pi$ if and only
if the maximal $\pi$-subgroups of $G$ are conjugate. A group  $G$ satisfying
$\E_\pi$ ($\C_\pi$, $\D_\pi$) is also called an {\it$\E_\pi$-group}
(respectively, a $\C_\pi$-group, a $\D_\pi$-group). Given set $\pi$ of primes
 we denote by $\E_\pi$, $\C_\pi$, and $\D_\pi$ the classes of all finite
$\E_\pi$-, $\C_\pi$-, and $\D_\pi$- groups, respectively.

 In the paper,  we solve the following problem from
``Kourovka notebook''~\cite{Kour}:

\medskip

\noindent{\bfseries Problem 1.}\label{17.44b}{\cite[Problem 17.44(b)]{Kour}}
 In a $\D_\pi$-group, is an overgroup of a $\pi$-Hall subgroup always a
$\D_\pi$-group?

\medskip

 The  analogous problem for $\C_\pi$-property (see \cite[Problem~17.44(a)]{Kour}) was answered in  the affirmative
(cf. \cite{VdovinRevin2012, VdovinRevin2013}). An equivalent formulation to
this statement is: {\em in a $\C_\pi$-group $\pi$-Hall subgroups are
pronormal.} Recall that a subgroup $H$ of a group $G$ is said to be {\em
pronormal} if, for every $g\in G$, $H$ and $H^g$ are conjugate in $\langle H,
H^g\rangle$.

According to \cite{VdovinRevin2011}, we say that $G$ {\it satisfies
$\U_\pi$}, if $G\in \D_\pi$ and every overgroup of a $\pi$-Hall subgroup of
$G$ satisfies $\D_\pi$. We denote also by $\U_\pi$ the class of all finite
groups satisfying $\U_\pi$. Thus Problem~\ref{17.44b} can be reformulated in
the following way:

\medskip

\noindent{\bfseries Problem 2.}  Is it true that $\D_\pi=\U_\pi$?

\medskip

The following main theorem gives an affirmative answer to Problems~1 and~2.

\begin{thm}{\em(Main theorem)}\label{main}
Let $\pi$~be a set of primes.  Then  $\D_\pi=\U_\pi$. In other words, if $G$
satisfies $\D_\pi$ and $H$ is a $\pi$-Hall subgroup of $G$, then every
subgroup~$M$ of $G$ with $H\le M$ satisfies~$\D_\pi$.
\end{thm}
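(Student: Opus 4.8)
The plan is to reduce the problem to a question about minimal counterexamples and then use the known reduction theory for $\D_\pi$ to a situation controlled by the classification of finite simple groups.
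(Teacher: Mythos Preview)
Your proposal is not a proof; it is a one-sentence sketch that does not engage with where the actual difficulty lies. The reduction you allude to is already in the literature: by Theorem~\ref{reduction} (equivalently, Lemma~\ref{max}), it suffices to show that in every finite \emph{simple} $\D_\pi$-group $G$, every maximal subgroup $M$ containing a $\pi$-Hall subgroup of $G$ again satisfies $\D_\pi$. Moreover, Lemmas~\ref{pinpi} and~\ref{2inpi} already dispose of alternating groups, sporadic groups, Lie-type groups in characteristic $p\in\pi$, and the entire case $2\in\pi$. So ``reduce and use CFSG'' merely brings you to the starting line of the paper's Section~3, where $G$ is a simple $\D_\pi$-group of Lie type with $2,p\notin\pi$.

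The substantive content, which your proposal gives no indication of, is the analysis of this remaining case. One must show that a maximal subgroup $M\ge H$ cannot have a composition factor $S\in\E_\pi\setminus\D_\pi$. This requires: (i) pinning down that such an $S$ would have to be $\PSL_{n_1}^\eta(q_1)$ satisfying very specific arithmetic constraints (Lemma~\ref{epi-dpi_lie} together with the $(*)$-property inherited from $G$); (ii) ruling out $M$ almost simple by an explicit rank comparison --- constructing an elementary abelian $t$-subgroup of $M$ centralising a Sylow $r$-subgroup whose rank exceeds what is possible inside $H$ (Lemmas~\ref{hall-gl} and~\ref{elab-gl}); (iii) for classical $G$, running through the Aschbacher classes $\mathcal{C}_1$--$\mathcal{C}_8$ and checking that no admissible $S$ with $q_1\ne q$ survives the arithmetic of Conditions~II--III; and (iv) for exceptional $G$, using the Liebeck--Seitz and Malle lists of maximal subgroups. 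None of these steps is routine, and step~(ii) in particular is the technical heart of the argument. A genuine proof must supply these ingredients or replace them with something equivalent.
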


One can formulate this statement by using the concept of strong pronormality.
According to \cite{VdovinRevin2011}, a subgroup $H$ of a group $G$ is said to
be {\em strongly pronormal} if, for every $g\in G$ and $K\le H$, there exists
$x\in \langle H, K^g\rangle$ such that $K^{gx}\le H$. Theorem \ref{main} is
equivalent to the following

\begin{thm}\label{main2}
Let $\pi$~be a set of primes. In a $\D_\pi$-group $\pi$-Hall subgroups are
strongly pronormal.
\end{thm}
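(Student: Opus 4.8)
The plan is to derive Theorem~\ref{main2} directly from the Main Theorem~\ref{main}, by showing that strong pronormality of $\pi$-Hall subgroups is just a reformulation of the equality $\D_\pi=\U_\pi$. Fix a $\D_\pi$-group $G$ and a $\pi$-Hall subgroup $H$ of $G$. Given $g\in G$ and a subgroup $K\le H$, I must produce $x\in\langle H,K^g\rangle$ with $K^{gx}\le H$. The natural ambient group is $M=\langle H,K^g\rangle$: it contains $H$, so it is an overgroup of a $\pi$-Hall subgroup and therefore lies in the scope of Theorem~\ref{main}.

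First I would record two routine facts about $M$. Since $H\le M\le G$, the index $|M:H|$ divides $|G:H|$ and hence is a $\pi'$-number, so $H$ is a $\pi$-Hall subgroup of $M$; and as $K\le H$ is a $\pi$-group, $K^g$ is a $\pi$-subgroup of $M$. Now Theorem~\ref{main} applies and gives $M\in\D_\pi$. By the $\D_\pi$-property, every $\pi$-subgroup of $M$ lies in some $\pi$-Hall subgroup of $M$, and by the attendant $\C_\pi$-property all $\pi$-Hall subgroups of $M$ are conjugate to $H$. Applying this to $K^g$ produces $x\in M$ with $K^g\le H^{x^{-1}}$, that is, $K^{gx}\le H$. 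As $x\in M=\langle H,K^g\rangle$, this is precisely the strong pronormality condition, and Theorem~\ref{main2} follows.

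For completeness I would verify the reverse implication, confirming that the two statements are genuinely equivalent. Assume $\pi$-Hall subgroups are strongly pronormal, let $M$ be an overgroup of $H$, and let $K$ be any $\pi$-subgroup of $M$. Using $G\in\D_\pi$, the subgroup $K$ lies in some conjugate $H^g$, so $L:=K^{g^{-1}}\le H$; applying strong pronormality to the pair $L\le H$ and $g$ yields $x\in\langle H,L^g\rangle=\langle H,K\rangle\le M$ with $L^{gx}=K^x\le H$. Hence $K$ lies in the $M$-conjugate $H^{x^{-1}}$, so by maximality every maximal $\pi$-subgroup of $M$ equals an $M$-conjugate of $H$. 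Thus the maximal $\pi$-subgroups of $M$ are conjugate, $M\in\D_\pi$, and $\D_\pi=\U_\pi$.

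The entire difficulty is concentrated in Theorem~\ref{main}, which I am permitted to assume here; the passage from it to Theorem~\ref{main2} is the routine conjugacy argument above, and the implication Theorem~\ref{main2}$\,\Rightarrow\,$Theorem~\ref{main} is the easy one. I expect the genuine obstacle behind Theorem~\ref{main}---reducing the overgroup assertion to statements about almost simple groups and settling those via the classification of finite simple groups---to be where all the real work lies. The value of the strong-pronormality formulation is precisely that it repackages the overgroup condition into a single conjugacy-theoretic property of $H$, which is often more convenient to propagate through such reductions.
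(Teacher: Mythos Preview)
Your proposal is correct and matches the paper's approach: the paper simply asserts that Theorem~\ref{main2} is equivalent to Theorem~\ref{main} and devotes all its effort to proving the latter, so your routine conjugacy argument deriving strong pronormality from $\D_\pi=\U_\pi$ (and the easy converse) is exactly the intended justification.
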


 In \cite[Theorem~7.7]{RevinVdovin2006}, it was  proven that  $G$
satisfies~$\D_\pi$ if and only if each composition factor of $G$ satisfies
$\D_\pi$. Using this result, an analogous criterion for $\U_\pi$ is obtained
in \cite{VdovinManRevin2012}.

 \begin{thm}\label{reduction}{\em \cite[Theorem~2]{VdovinManRevin2012}}
A finite group $G$ satisfies $\U_\pi$ if and only if each composition factor
of $G$ satisfies~$\U_\pi$.
 \end{thm}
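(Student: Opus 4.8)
The plan is to prove both implications at once by induction on $|G|$, by showing that the class $\U_\pi$ enjoys three closure properties: it is closed under passing to a normal subgroup, under passing to a quotient, and under extensions. Granting these, the theorem follows. If $G\in\U_\pi$ and $1=G_0\unlhd G_1\unlhd\cdots\unlhd G_n=G$ is a composition series, then closure under normal subgroups, applied stepwise down the series, shows each $G_i\in\U_\pi$, and each composition factor $G_{i+1}/G_i$, being a quotient of $G_{i+1}\in\U_\pi$, lies in $\U_\pi$ by quotient closure. Conversely, if every composition factor of $G$ lies in $\U_\pi$, pick a minimal normal subgroup $N$; both $N$ and $G/N$ have all composition factors in $\U_\pi$, hence lie in $\U_\pi$ by induction, and extension closure gives $G\in\U_\pi$. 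The base case ($G$ simple) is vacuous. Throughout I use freely the composition-factor criterion for $\D_\pi$ of \cite{RevinVdovin2006}, together with the elementary facts that for a $\pi$-Hall subgroup $H$ of $G$ and $N\unlhd G$, the subgroup $H\cap N$ is a $\pi$-Hall subgroup of $N$ and $HN/N$ is a $\pi$-Hall subgroup of $G/N$.

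Extension and quotient closure are the easy part. For extensions, let $N\unlhd G$ with $N,G/N\in\U_\pi$; then $G\in\D_\pi$ by the composition-factor criterion, and for any overgroup $M$ of a $\pi$-Hall subgroup $H$ of $G$, the subgroup $M\cap N$ is an overgroup of the $\pi$-Hall subgroup $H\cap N$ of $N$, while $MN/N$ is an overgroup of the $\pi$-Hall subgroup $HN/N$ of $G/N$. By hypothesis $M\cap N\in\D_\pi$ and $M/(M\cap N)\cong MN/N\in\D_\pi$, so $M\in\D_\pi$ by the criterion. Quotient closure is similar: after conjugating in $G/N$ we may assume a given $\pi$-Hall subgroup of $G/N$ is $HN/N$ for a $\pi$-Hall subgroup $H$ of $G$, so its overgroups in $G/N$ are images of overgroups of $H$ in $G$, which lie in $\D_\pi$, and $\D_\pi$ is quotient-closed. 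I also record the key device: if $G\in\U_\pi$ and $H\le M\le G$ with $H$ a $\pi$-Hall subgroup of $G$, then $M\in\U_\pi$, since $|G:M|$ is a $\pi'$-number, whence the $\pi$-Hall subgroups of $M$ are exactly those of $G$ lying in $M$, and their overgroups in $M$ are overgroups of $\pi$-Hall subgroups of $G$.

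The heart of the matter is closure under normal subgroups. Let $N\unlhd G$ with $G\in\U_\pi$, and choose a minimal normal subgroup $L$ of $G$ with $L\le N$. By quotient closure $G/L\in\U_\pi$, so by induction (the normal-subgroup statement for the smaller group $G/L$) we get $N/L\in\U_\pi$; hence, by extension closure, it suffices to prove $L\in\U_\pi$. If $L$ is abelian it is an elementary abelian $p$-group, every subgroup of which is a $\pi$-group or a $\pi'$-group, so $L\in\U_\pi$ trivially. For nonabelian $L=S_1\times\cdots\times S_k$ I would fix a $\pi$-Hall subgroup $H$ of $G$ with $H\cap L$ a $\pi$-Hall subgroup of $L$, take an overgroup $M_L$ of $H\cap L$ in $L$, and form $M=\langle H,M_L\rangle$. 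Then $M\supseteq H$ forces $M\in\U_\pi$ by the device above, and $M\cap L\unlhd M$ is an overgroup in which $H\cap L$ is still a $\pi$-Hall subgroup; whenever $M<G$, the inductive hypothesis gives $M\cap L\in\U_\pi$ and therefore $M_L\in\D_\pi$.

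The main obstacle is precisely the case in which this step yields $M=G$, which occurs exactly when $G=HL$, that is, when $G/L$ is a $\pi$-group and the $H$-invariant closure of $M_L$ equals $L$. Here there is no proper overgroup of $H$ to induct on, and one cannot salvage the argument by claiming that the composition factors of $M_L$ lie in $\D_\pi$ simply because they are composition factors of subgroups of $L$: the class $\D_\pi$ is not subgroup-closed, so this is false in general. A genuinely different argument is needed to control the overgroups of a $\pi$-Hall subgroup inside a direct power of a nonabelian simple group on which the $\pi$-group $G/L$ acts by permuting and twisting the factors. I expect this configuration to be the technical core of the proof, and the place where finer information about $\D_\pi$ in almost simple groups must be brought to bear.
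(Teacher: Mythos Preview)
First, note that the paper does not prove this statement: Theorem~\ref{reduction} is quoted from \cite[Theorem~2]{VdovinManRevin2012} without argument, so there is no proof in the present paper to compare your proposal against.

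On the proposal itself: your arguments for extension closure and quotient closure of $\U_\pi$ are correct, as is the ``key device'' that an overgroup of a $\pi$-Hall subgroup in a $\U_\pi$-group again lies in $\U_\pi$. Your reduction of the normal-subgroup case to a minimal normal subgroup $L\unlhd G$ is sound, and you correctly isolate the obstruction: when $L$ is nonabelian, $G=HL$, and the $H$-invariant closure of the given overgroup $M_L\le L$ equals $L$, your inductive step $M=\langle H,M_L\rangle$ returns $M=G$ and yields nothing.

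This is a genuine gap, not a formality, and you say as much; the proposal is therefore incomplete as it stands. One observation that sharpens the target (though it does not close the gap): filtering $M_L$ by $K_i=M_L\cap(S_i\times\cdots\times S_k)$ shows that each quotient $K_i/K_{i+1}$ embeds in $S_i$ as an overgroup of the $\pi$-Hall subgroup $H\cap S_i$, so by the composition-factor criterion for $\D_\pi$ and extension closure it would suffice to prove that the simple factor $S$ itself lies in $\U_\pi$. But even for $k=1$ the residual configuration $G=HS$ with $\langle M_S^{\,h}:h\in H\rangle=S$ is not handled by your methods. Whatever argument \cite{VdovinManRevin2012} supplies at this point, it is not reproduced in your outline.
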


In order to solve \cite[Problem~17.44(a)]{Kour}, the pronormality of Hall
subgroup in finite simple groups was proven in~\cite{VdovinRevin2012}. The
strong pronormality of Hall subgroups in finite simple groups together with
Theorem \ref{reduction} would imply the main theorem. However, M.Nesterov in
\cite{Nesterov} showed that $\PSp_{10}(7)$ contains a   $\{2,3\}$-Hall
subgroup that is not strongly pronormal.

Theorem \ref{reduction} reduces Problem 1  to a similar problem for simple
$\D_\pi$-groups. All simple $\D_\pi$-groups are known: in pure arithmetic
terms, necessary and sufficient conditions for a simple group $G$ to satisfy
$\D_\pi$ can be found in \cite{Revin2008e}. It was proved in
\cite{VdovinManRevin2012} that if $G\in \D_\pi$ is an alternating group, a
sporadic group or a group of Lie type in characteristic $p\in \pi$, then $G$
satisfies~$\U_\pi$. An affirmative answer to Problem~\ref{17.44b} in case
$2\in \pi$ is obtained in \cite{Manzaeva2014}. In this paper, we consider
the remaining case of  $\D_\pi$-groups of Lie type in characteristic~$p$
with~${2,p\notin \pi}$.

\section{Notation and preliminary results}

All groups in the paper are assumed to be finite. Our notation is standard
and agrees with that of \cite{CFSG} and \cite{KleiLie}. By $A\splitext B$ and
$A\arbitraryext B$ we denote a split extension and an arbitrary extension of
a group $A$ by a group $B$, respectively. Symbol $A\times B$  denotes the
direct  product of $A$ and $B$. If $G$ is a group and $S$ is a permutation
group, then $G\wr S$ is the permutation wreath product of $G$ and~$S$. We use
notations $H\leq G$ and $H \unlhd G$ instead of ``$H$ is a subgroup of~$G$''
and ``$H$ is a normal subgroup of~$G$'', respectively. For $M\subseteq G$ we
set $M^G=\{M^g\mid g\in G\}$. The subgroup generated by a subset $M$ is
denoted by $\langle M \rangle$. The normalizer and the centralizer of $H$ in
$G$ are denoted by $N_G(H)$ and $C_G(H)$, respectively, while $Z(G)$ is the
center of~$G$.  The generalized Fitting subgroup of $G$ is denoted by
$F^*(G)$. For a group $G$, we denote by $\Aut(G)$ and $\Out(G)$ the
automorphism group and the outer automorphism group, respectively. Denote a
cyclic group of order $n$ by $n$,  and an arbitrary solvable group of order
$n$ by $\ldbrack n \rdbrack$. Recall that, for a group $X$ and a prime $t$, a
$t$-rank $m_t(X)$ is the maximal rank of elementary abelian $t$-subgroups of
$X$.

Throughout, $\F_q$ is a finite field of order $q$ and characteristic $p$.
By $\eta$ we always denote an element of the set $\{+, -\}$ and we use $\eta$
instead of $\eta1$ as well. In order to make uniform statements and
arguments, we use the following notations $\GL_n^+(q)=\GL_n(q)$,
$\GL_n^-(q)=\GU_n(q)$, $\SL_n^+(q)=\SL_n(q)$, $\SL_n^-(q)=\SU_n(q)$,
$\PSL_n^+(q)=\PSL_n(q)$, $\PSL_n^-(q)=\PSU_n(q)$, $E_6^+(q)=E_6(q)$,
$E_6^-(q)={}^2E_6(q)$.   If $G$ is a group of Lie type, then by $W(G)$ we
denote the Weyl group of $G$.

 The integral part of a real number $x$ is denoted by   $\left[ x \right]$. For
integers $n$ and  $m$, we denote
 by $\gcd(n,m)$ and $\lcm(n,m)$ the greatest common divisor and the least common
multiple, respectively. If $\pi$ is a set of primes,
 then $\min(\pi)$ is the smallest prime in $\pi$. If  $n$~is a positive integer,
then $n_\pi$ is the largest divisor $d$ of $n$
 with $\pi(d)\subseteq\pi$.  If $g$ is an element of a group then there are
elements $g_\pi$ and $g_{\pi'}$ in $\langle g \rangle$
 such that $g=g_\pi g_{\pi'}$ and $|g_\pi|$ is a $\pi$-number, while
$|g_{\pi'}|$ is a $\pi'$-number.

 If $r$~is an odd prime and  $k$~is an integer not divisible by $r$, then
$e(k,r)$ is the smallest
 positive integer  $e$ with $k^e \equiv 1 \pmod r$.
 So, $e(k,r)$ is the multiplicative order of $k$ modulo~$r$. In particular, if
$e=e(k,r)$, then
 $$
 e(k^a,r)=\frac{e}{\gcd(e,a)}.
 $$
For a natural number $e$ set

 $$e^*=\left\{
\begin{array}{ll}
2e & \text{ if }  e\equiv 1\pmod 2,\\
e & \mbox{ if   }  e\equiv 0\pmod 4,\\
e/2 & \mbox{ if  }  e\equiv 2\pmod 4.
\end{array}
\right.
$$

The next result may be found in \cite{Weir1955}.

\begin{lem}\label{r4ast'}{\em (\cite{Weir1955}, \cite[Lemmas~2.4 and
2.5]{Gross1995})}
 Let $r$ be an odd prime, $k$~an integer not divisible by $r$,  and  $m$~a
positive integer. Denote $e(k,r)$ by $e$.

 Then the following identities hold.

$$(k^m-1)_r=\left\{
\begin{array}{ll}
(k^e-1)_r (m/e)_{r} & \text{ if } e \text{ divides } m,\\
1 & \text{ if } e \text{ does not divide } m;\end{array}\right.$$

$$(k^m-(-1)^m)_r=\left\{
\begin{array}{ll}
(k^{e^*}-(-1)^{e^*})_r (m/{e^*})_{r} & \text{ if } e^* \text{ divides } m,\\
1 & \text{ if }  e^* \text{ does not divide } m.\end{array}\right.$$

$$\prod_{i=1}^{m}(k^i-1)_r=(k^e-1)_r^{[m/e]}([m/e]!)_r$$

$$\prod_{i=1}^{m}(k^i-(-1)^i)_r=(k^{e^*}-(-1)^{e^*})_r^{[m/e^*]}([m/e^*]!)_r$$

\end{lem}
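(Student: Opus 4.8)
The plan is to derive all four identities from a single computation of the $r$-part of $k^t-1$ for $k\equiv 1\pmod r$, namely the Lifting-the-Exponent formula: if $r$ is an odd prime, $r\nmid a$, and $r\mid a-1$, then $(a^t-1)_r=(a-1)_r\, t_r$. Everything else is bookkeeping built on top of this. First I would prove the first identity. If $e\nmid m$ then $k^m\not\equiv 1\pmod r$, so $r\nmid k^m-1$ and $(k^m-1)_r=1$. If $e\mid m$, write $m=et$ and set $a=k^e$; since $e=e(k,r)$ we have $a\equiv 1\pmod r$ and $r\nmid a$, so $k^m-1=a^t-1$ and the Lifting-the-Exponent formula gives $(k^m-1)_r=(k^e-1)_r\,(m/e)_r$, as claimed.

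Next I would reduce the second identity to the first by the substitution $k\mapsto -k$. For every $i$ one has $(-1)^i\bigl(k^i-(-1)^i\bigr)=(-k)^i-1$, and since $r$ is odd the factor $(-1)^i$ does not affect $r$-parts, whence
$$
\bigl(k^i-(-1)^i\bigr)_r=\bigl((-k)^i-1\bigr)_r.
$$
It therefore suffices to identify $e(-k,r)$ with $e^*$. This is a short order computation: $k^m\equiv(-1)^m\pmod r$ is equivalent to $(-k)^m\equiv 1\pmod r$, so the relevant order is $e(-k,r)$, and checking the three congruence classes of $e$ modulo $4$ shows the smallest positive $m$ with this property equals $e^*$ in each case (for instance, when $e\equiv 2\pmod 4$ the odd value $m=e/2$ already satisfies $k^m\equiv-1$). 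Applying the first identity to $-k$ and using $e(-k,r)=e^*$ together with $\bigl(k^{e^*}-(-1)^{e^*}\bigr)_r=\bigl((-k)^{e^*}-1\bigr)_r$ yields the second identity.

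Finally I would obtain the two product formulas by collecting the nontrivial factors. In $\prod_{i=1}^{m}(k^i-1)_r$ the first identity makes every factor with $e\nmid i$ equal to $1$, so only the indices $i=je$ with $1\le j\le[m/e]$ survive, each contributing $(k^e-1)_r\,(j)_r$; multiplying these gives $(k^e-1)_r^{[m/e]}([m/e]!)_r$. The fourth identity follows verbatim with $e$ replaced by $e^*$, using the second identity in place of the first. The only genuine content is the Lifting-the-Exponent step and the order computation for $e^*$; the main point to get right is the case analysis identifying $e^*=e(-k,r)$, since an error in one parity class would propagate to both the second and the fourth identities. I would present these computations succinctly, as the result is classical and due to Weir and Gross.
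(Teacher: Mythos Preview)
The paper does not give its own proof of this lemma; it is stated with a citation to Weir and to Gross and then used as a black box. Your proposal is a correct, self-contained argument: the Lifting-the-Exponent step yields the first identity, the substitution $k\mapsto -k$ together with the verification $e(-k,r)=e^*$ reduces the second to the first, and the product formulas follow by summing over the multiples of $e$ (respectively $e^*$). There is nothing to compare against in the paper itself, so your write-up would simply supply a proof where the paper chose to cite one.
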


In Lemma~\ref{L_Epi}, we collect some known facts about $\pi$-Hall subgroups
in finite groups.

\begin{lem}\label{L_Epi}
Let $G$ be a finite group, $A$~a normal subgroup of $G$.

\begin{enumerate}[{\em(a)}]
\item If $H$ is a $\pi$-Hall subgroup of $G$ then $H\cap A$ is a $\pi$-Hall
    subgroup of $A$ and $HA/A$~is a $\pi$-Hall subgroup of $G/A$. In
    particular, a normal subgroup and a homomorphic image of an
    $\E_\pi$-group satisfy $\E_\pi$. {\em (see
    \cite[Lemma~1]{Hall1956})}\label{Epi}
\item If $M/A$ is a $\pi$-subgroup of $G/A$, then there exists a
    $\pi$-subgroup $H$ of $G$ with $M=HA$. {\em (see
    \cite[Lemma~2.1]{Gross1986})} \label{existpi}
\item An extension of a $C_\pi$-group by a $C_\pi$-group satisfies $C_\pi$.
    {\em (see  \cite[C1 and~C2]{Hall1956} or
    \cite[Proposition~5.1]{VdovinRevin2011})}
\item If $2\notin \pi$ then $\E_\pi=\C_\pi$. In particular, if $2\notin
    \pi$ then a group $G$ satisfies~$\E_\pi$ if and only if each
    composition factor of $G$ satisfies $\E_\pi$. {\em  (see
    \cite[Theorem~A]{Gross1987}, \cite[Theorem 2.3]{Gross1995},
    \cite[Theorem~5.4]{VdovinRevin2011})} \label{epi=cpi}
\item If $G$ possesses a nilpotent $\pi$-Hall subgroup then $G$ satisfies
    $\D_\pi$. {\em (see  \cite{Wielandt1954},
    \cite[Theorem~6.2]{VdovinRevin2011}) } \label{dpi_nil}
\item A group~$G$ satisfies $\D_\pi$ if and only if  both $A$ and $G/A$
    satisfy $\D_\pi$. Equivalently,  $G\in \D_\pi$ if and only if each
    composition factor of $G$ satisfies~$\D_\pi$. {\em (see
    \cite[Theorem~7.7]{RevinVdovin2006},
    \cite[Collorary~6.7]{VdovinRevin2011})} \label{Dpi_comp_crit}
\end{enumerate}
\end{lem}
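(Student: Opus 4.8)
The plan is to read Lemma~\ref{L_Epi} as a compilation of six independent, previously known facts, and to establish each one either by a short direct argument or by invoking the reference attached to it in the statement. The elementary parts are (a)--(c); the remaining parts (d)--(f) are deep theorems that I would not reprove but simply cite.

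For (a) I would use only the isomorphism theorems and multiplicativity of the index. Since $H\cap A\le H$ its order is a $\pi$-number, and because $A/(H\cap A)\cong HA/H$ is a section of $G/H$, the index $|A:H\cap A|$ divides the $\pi'$-number $|G:H|$; hence $H\cap A$ is a $\pi$-Hall subgroup of $A$. Dually $HA/A\cong H/(H\cap A)$ has $\pi$-order and $|G/A:HA/A|=|G:HA|$ divides $|G:H|$, so $HA/A$ is a $\pi$-Hall subgroup of $G/A$. The ``in particular'' clause is then immediate: a normal subgroup $A$ inherits the $\pi$-Hall subgroup $H\cap A$, and a quotient $G/A$ inherits $HA/A$. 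For (b), the quickest route is to reduce to the case $M=G$ with $G/A$ a $\pi$-group and to produce a $\pi$-subgroup $H$ with $HA=G$ by induction on $|A|$: passing to $G/B$ for a minimal normal subgroup $B\le A$, one splits $B$ off by Schur--Zassenhaus when $B$ is a $\pi'$-group and absorbs it when $B$ is a $\pi$-group. The only genuinely delicate point is a chief factor that is a nonabelian product of simple groups, where this clean dichotomy breaks; for that I would simply appeal to \cite[Lemma~2.1]{Gross1986}.

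Part (c) is the classical closure of $\C_\pi$ under extensions. Existence of a $\pi$-Hall subgroup of $G$ follows from (a), (b) and a Frattini argument: lifting a $\pi$-Hall subgroup $M/A$ of $G/A$ and using the conjugacy of the $\pi$-Hall subgroups of $A$ to write $M=A\,N_M(H_A)$ for a $\pi$-Hall subgroup $H_A$ of $A$. For conjugacy I would take two $\pi$-Hall subgroups $H_1,H_2$ of $G$, first conjugate in $A$ so that $H_1\cap A=H_2\cap A$, then conjugate in $G/A$ so that $H_1A=H_2A$, and finish inside the common product $H_1A$ by another Frattini argument on the normalizer of $H_1\cap A$; this is exactly Hall's conditions C1 and C2, so I would cite \cite{Hall1956} or \cite[Proposition~5.1]{VdovinRevin2011}.

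The substantive content sits in (d)--(f), and here I would not attempt independent proofs. Part (d), the equality $\E_\pi=\C_\pi$ for $2\notin\pi$, and part (f), the composition-factor criterion for $\D_\pi$, both rest ultimately on the classification of finite simple groups; I would invoke \cite{Gross1987,Gross1995} (respectively \cite[Theorem~7.7]{RevinVdovin2006}) verbatim. Part (e), Wielandt's theorem that a nilpotent $\pi$-Hall subgroup forces $\D_\pi$, is classical and classification-free, and I would cite \cite{Wielandt1954}. Thus the main obstacle is not any single calculation but the fact that (d) and (f) are genuinely hard, classification-dependent theorems whose proofs lie well outside the scope of this paper; recognising that the whole of Lemma~\ref{L_Epi} is needed only as a black box, the honest course is to record the cited results and pass on.
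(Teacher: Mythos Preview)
Your approach matches the paper's: Lemma~\ref{L_Epi} is stated there as a compilation of known results with embedded citations and is given no proof whatsoever, so your plan to cite (d)--(f) and sketch (a)--(c) is already more than the paper does.
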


\begin{lem} \label{Dpi}
{\em (\cite[Theorem~3]{Revin2008e}, \cite[Theorem~6.9]{VdovinRevin2011})} Let
$S$ be a simple group of Lie type with the base field  $\F_q$ of
characteristic $p$. Suppose $2\not \in \pi$ and $|\pi\cap\pi(S)|\ge 2$.  Then
$S$ satisfies~$\D_\pi$ if and only if the pair~$(S, \pi)$ satisfies one of
the Condition  I-IV below.
\end{lem}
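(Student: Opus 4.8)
The plan is to convert the hypothesis $2\notin\pi$ into a reduction. By Lemma~\ref{L_Epi} we have $\E_\pi=\C_\pi$ once $2\notin\pi$, so $S$ satisfies $\C_\pi$ as soon as it satisfies $\E_\pi$; hence the property $\D_\pi$ breaks into two independent questions: (i) \emph{existence} of a $\pi$-Hall subgroup of $S$, and (ii) \emph{domination}, namely whether every $\pi$-subgroup of $S$ lies in some $\pi$-Hall subgroup. The hypothesis $|\pi\cap\pi(S)|\ge2$ is what makes this nontrivial: were $\pi\cap\pi(S)$ to contain at most one prime, every $\pi$-subgroup would be an $r$-subgroup for a single $r$ and $\D_\pi$ would be immediate from Sylow's theorem. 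I would therefore organise Conditions~I--IV so that each fixes the shape of $\pi\cap\pi(S)$ relative to the base field $\F_q$, and prove the equivalence one condition at a time.

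For the arithmetic backbone I would rely on Lemma~\ref{r4ast'}. Since, up to the power of $p$, the order $|S|$ is a product of factors $q^i-1$ and $q^i-(-1)^i$, that lemma yields the exact $r$-part $|S|_r$ for each odd prime $r\neq p$ in terms of $e=e(q,r)$, and thereby locates a Sylow $r$-subgroup inside a maximal torus of the corresponding $e$-type, whose normaliser is governed by the Weyl group $W(S)$. Running over all $r\in\pi$ assembles a candidate $\pi$-Hall subgroup inside the normaliser of a torus, or more generally inside a reductive subgroup of maximal rank; a genuine $\pi$-Hall subgroup exists precisely when these local candidates are mutually compatible, which is what Conditions~I--IV should encode. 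When $p\in\pi$ one must additionally account for the unipotent part, sitting in the unipotent radical of a Borel subgroup, but in the situation of interest here $p\notin\pi$, so only the semisimple (toral) contributions arise.

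For the domination half I would use the centralizers of semisimple elements. An element of order $r$ is conjugate into a torus of $e(q,r)$-type, and a $\pi$-subgroup that embeds into a single $\pi$-Hall subgroup forces the ambient tori of its various prime parts to be realisable simultaneously. When the invariants $e(q,r)$ for $r\in\pi$ satisfy one of Conditions~I--IV, a common torus normaliser (or reductive overgroup of maximal rank) accommodates all of them and domination holds; otherwise I would exhibit an explicit obstruction---a product of commuting elements drawn from tori of incompatible $e$-types---that lies in no $\pi$-Hall subgroup, forcing $S\notin\D_\pi$.

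The principal obstacle I expect is the domination direction, and within it the uniform, type-by-type control of maximal tori, their normalisers, and the centralizers of semisimple elements across all the classical and exceptional families. Upgrading the qualitative slogan ``incompatible $e$-types obstruct a common Hall overgroup'' into an honest construction of an obstructing $\pi$-subgroup, valid in every family and matched correctly against each of Conditions~I--IV, is where the detailed structure theory of finite reductive groups and the bookkeeping afforded by Lemma~\ref{r4ast'} will be worked hardest.
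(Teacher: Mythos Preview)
The paper does not prove Lemma~\ref{Dpi}: it is quoted verbatim from \cite[Theorem~3]{Revin2008e} and \cite[Theorem~6.9]{VdovinRevin2011}, with Conditions~I--IV simply listed afterwards, and no argument is supplied. There is therefore no ``paper's own proof'' against which your proposal can be compared.

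As to your outline itself, it is a reasonable high-level description of the strategy behind the cited results, but it is not a proof and it contains a slip. The reduction $\E_\pi=\C_\pi$ for $2\notin\pi$ and the plan of assembling Hall subgroups inside normalisers of maximal tori via the arithmetic of Lemma~\ref{r4ast'} are indeed the backbone of the actual argument. However, you write that ``in the situation of interest here $p\notin\pi$''; this is wrong for the statement as given, since Condition~I is precisely the case $p\in\pi$, and it must be treated (a Borel subgroup carries the $\pi$-Hall subgroup there, and the $\D_\pi$-condition becomes a constraint on $|W(S)|$). More substantially, your proposal stops exactly where the real work begins: the ``uniform, type-by-type control'' you anticipate is not an obstacle to be noted but the entire content of the theorem, amounting to a lengthy case analysis across all classical and exceptional families and producing the precise arithmetic clauses (a)--(o) of Condition~III, (a)--(h) of Condition~II, and so on. Nothing in your sketch explains, for instance, why the delicate extra clauses for $E_6$, $E_7$, $E_8$, $F_4$ in Condition~III arise, or why Condition~II splits into the eight subcases it does. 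What you have written is a correct orientation toward the literature, not a proof.
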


\noindent{\bf Condition I.} Let  $p\in\pi$ and
$\tau=(\pi\cap\pi(S))\setminus\{p\}$. We say that~$(S,\pi)$ {\it satisfies
Condition}~I if $\tau\subseteq\pi(q-1)$ and every number from $\pi$ does not
divide~$|W(S)|$.

\smallskip

\noindent{\bf Condition II.} Suppose that $S$ is not isomorphic to
${^2B}_2(q),{^2G}_2(q),{^2F}_4(q)'$ and $p\not\in\pi$. Set
$r=\min(\pi\cap\pi(S))$ and  $\tau=(\pi\cap\pi(S))\setminus\{r\}$. Denote by
$a$ the number $e(q,r)$. We say that $(S,\pi)$ {\it satisfies Condition}~II
if there exists  $t\in\tau$ with $b=e(q,t)\ne a$ and one of the following
holds.
\begin{enumerate}[(a)]

\item  $S\simeq A_{n-1}(q)$, $a=r-1$, $b=r$, $(q^{r-1}-1)_r=r$,
    $\left[\displaystyle\frac{n}{r-1}\right]=\left[\displaystyle\frac{n}{r}\right]$
    and both $e(q,s)=b$ and $n<bs$ hold for every $s\in\tau$.

\item  $S\simeq A_{n-1}(q)$, $a=r-1$, $b=r$, $(q^{r-1}-1)_r=r$,
    $\left[\displaystyle\frac{n}{r-1}\right]=\left[\displaystyle\frac{n}{r}\right]
    +1$, $n\equiv -1 \pmod r$ and both $e(q,s)=b$ and $n<bs$ hold for every
    $s\in\tau$.

\item  $S\simeq {^2A}_{n-1}(q)$, $r\equiv 1 \pmod 4$, $a=r-1$, $b=2r$,
    $(q^{r-1}-1)_r=r$,
    $\left[\displaystyle\frac{n}{r-1}\right]=\left[\displaystyle\frac{n}{r}\right]$
    and $e(q,s)=b$ for every $s\in\tau$.

\item  $S\simeq {^2A}_{n-1}(q)$, $r\equiv 3 \pmod 4$,
    $a=\displaystyle\frac{r-1}{2}$, $b=2r$, $(q^{r-1}-1)_r=r$,
    $\left[\displaystyle\frac{n}{r-1}\right]=\left[\displaystyle\frac{n}{r}\right]$
    and $e(q,s)=b$ for every $s\in\tau$.

\item  $S\simeq {^2A}_{n-1}(q)$, $r\equiv 1 \pmod 4$, $a=r-1$, $b=2r$,
    $(q^{r-1}-1)_r=r$,
    $\left[\displaystyle\frac{n}{r-1}\right]=\left[\displaystyle\frac{n}{r}\right]
    +1$, $n\equiv -1 \pmod r$ and $e(q,s)=b$ for every $s\in\tau$.

\item $S\simeq {^2A}_{n-1}(q)$, $r\equiv 3 \pmod 4$,
    $a=\displaystyle\frac{r-1}{2}$, $b=2r$, $(q^{r-1}-1)_r=r$,
    $\left[\displaystyle\frac{n}{r-1}\right]=\left[\displaystyle\frac{n}{r}\right]
    +1$, $n\equiv -1 \pmod r$ and $e(q,s)=b$ for every $s\in\tau$.

\item $S\simeq {^2D}_n(q)$, $a\equiv 1 \pmod 2$, $n=b=2a$ and for every
    $s\in\tau$ either $e(q,s)=a$ or $e(q,s)=b$. \label{CondII_2D_n_1}

\item $S\simeq {^2D}_n(q)$, $b\equiv 1 \pmod 2$, $n=a=2b$ and for every
    $s\in\tau$ either $e(q,s)=a$ or $e(q,s)=b$. \label{CondII_2D_n_2}

\end{enumerate}

In cases (\ref{CondII_2D_n_1})-(\ref{CondII_2D_n_2}), a $\pi$-Hall subgroup
of $S\simeq {}^2D_n(q) $ is cyclic.

\smallskip

\noindent{\bf Condition III.} Suppose that $S$ is not isomorphic to
${^2B}_2(q),{^2G}_2(q),{^2F}_4(q)'$ and $p\not\in\pi$. Set
$r=\min(\pi\cap\pi(S))$ and  $\tau=(\pi\cap\pi(S))\setminus\{r\}$. Denote by
$c$ the number $e(q,r)$. We say that $(S,\pi)$ {\it satisfies Condition}~III
if ${e(q,t)=c}$ for every $t\in\tau$ and one of the following holds.
\begin{enumerate}[(a)]
\item  $S\simeq A_{n-1}(q)$ and $n<ct$ for every $t\in\tau$.

\item  $S\simeq{^2A}_{n-1}(q)$, $c\equiv 0 \pmod 4$ and $n<ct$ for every
    $t\in\tau$.

\item  $S\simeq{^2A}_{n-1}(q)$, $c\equiv 2 \pmod 4$ and $2n<ct$ for every
    $t\in\tau$.

\item  $S\simeq{^2A}_{n-1}(q)$, $c\equiv 1 \pmod 2$ and $n<2ct$ for every
    $t\in\tau$.

\item  $S$ is isomorphic to one of the groups $B_n(q)$, $C_n(q)$ or
    ${^2D}_n(q)$,  $c$ is even and $2n<ct$ for every $t\in\tau$.

\item   $S$ is isomorphic to one of the groups $B_n(q)$, $C_n(q)$ or
    $D_n(q)$, $c$ is odd and $n<ct$ for every $t\in\tau$.

\item  $S\simeq D_n(q)$,  $c$ is even and $2n\le ct$ for every $t\in\tau$.

\item  $S\simeq {^2D}_n(q)$,  $c$  is odd and $n\le ct$ for every
    $t\in\tau$.

\item  $S\simeq {^3D}_4(q)$.

\item  $S\simeq E_6(q)$  and  if $r=3$ and $c=1$ then $5,13\not\in\tau$.

\item  $S\simeq {^2E}_6(q)$ and if $r=3$ and  $c=2$ then $5,13\not\in\tau$.

\item  $S\simeq E_7(q)$; if $r=3$ and $c\in\{1,2\}$ then
    $5,7,13\not\in\tau$, and if $r=5$ and $c\in\{1,2\}$ then
    $7\not\in\tau$.

\item  $S\simeq E_8(q)$; if $r=3$ and $c\in\{1,2\}$ then
    $5,7,13\not\in\tau$, and if $r=5$ and $c\in\{1,2\}$ then
    $7,31\not\in\tau$.

\item  $S\simeq G_2(q)$.

\item  $S\simeq F_4(q)$ and if $r=3$ and $c=1$ then $13\not\in\tau$.

\end{enumerate}

\smallskip

\noindent{\bf Condition IV.} We say that $(S,\pi)$ {\it satisfies
Condition}~IV if one of the following holds.

\begin{enumerate}[(a)]

\item $S\simeq {^2B}_2(2^{2m+1})$, $\pi\cap\pi(G)$ is contained in one of
    the sets $\pi(2^{2m+1}-1)$, $\pi(2^{2m+1}\pm 2^{m+1}+1)$.

\item $S\simeq {^2G}_2(3^{2m+1})$, $\pi\cap\pi(G)$ is contained in one of
    the sets $\pi(3^{2m+1}-1)\setminus\{2\}$, $\pi(3^{2m+1}\pm
    3^{m+1}+1)\setminus\{2\}$.

\item $S\simeq {^2F}_4(2^{2m+1})'$, $\pi\cap\pi(G)$ is contained in one of
    the sets $\pi(2^{2(2m+1)}\pm 1)$, $\pi(2^{2m+1}\pm 2^{m+1}+1)$,
    $\pi(2^{2(2m+1)}\pm 2^{3m+2}\mp2^{m+1}-1)$, $\pi(2^{2(2m+1)}\pm
    2^{3m+2}+2^{2m+1}\pm2^{m+1}-1)$.

\end{enumerate}

In the next three lemmas, we recall some preliminary results about
$\U_\pi$-pro\-per\-ty.

\begin{lem}\label{pinpi}{\em \cite[Theorem 4]{VdovinManRevin2012}}
If $G\in\D_\pi$~is either an alternating group, or a sporadic simple group,
or a simple group of Lie type in characteristic $p \in \pi$, then $G$
satisfies $\U_\pi$.
\end{lem}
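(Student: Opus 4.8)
The plan is to reduce the statement to strong pronormality of $\pi$-Hall subgroups. First I would record the elementary fact that, for $G\in\D_\pi$ with $\pi$-Hall subgroup $H$, strong pronormality of $H$ already forces $G\in\U_\pi$. Indeed, let $H\le M\le G$ and let $K$ be a $\pi$-subgroup of $M$. Since $G\in\D_\pi$, some conjugate of $K$ lies in $H$, say $K^{g}\le H$ with $g\in G$; applying strong pronormality to $K^{g}\le H$ and the element $g^{-1}$ gives $x\in\langle H,(K^{g})^{g^{-1}}\rangle=\langle H,K\rangle\le M$ with $K^{x}\le H$. Thus every $\pi$-subgroup of $M$ is $M$-conjugate into the $\pi$-Hall subgroup $H$ of $M$, so $M\in\D_\pi$ and hence $G\in\U_\pi$. (The converse is equally easy, so for a simple $\D_\pi$-group the property $\U_\pi$ is \emph{equivalent} to strong pronormality of its $\pi$-Hall subgroups.) It then remains to verify this property in the three families; I would treat the groups of Lie type in characteristic $p\in\pi$ as the main case and dispose of the alternating and sporadic groups by explicit analysis.

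For a simple group $S$ of Lie type over $\F_q$ of characteristic $p\in\pi$, the crucial observation is that any $\pi$-Hall subgroup $H$ contains a full Sylow $p$-subgroup $U$ (a maximal unipotent subgroup), because $p\in\pi$ and $|S:H|$ is a $\pi'$-number. I would argue by induction on the Lie rank of $S$, proving directly, via the composition-factor criterion of Lemma~\ref{L_Epi}(\ref{Dpi_comp_crit}), that every overgroup $M$ of $H$ satisfies $\D_\pi$. If $M=S$ this holds by hypothesis. Otherwise $U\le M$ and $M\ne S$, and it is well known (a consequence of the Borel--Tits theorem) that a proper subgroup containing a Sylow $p$-subgroup lies in a proper parabolic subgroup $P=R_u(P)\rtimes L$; here the unipotent radical $R_u(P)$ is contained in $U$, hence in $H$, and $L$ is a Levi factor. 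Since $R_u(P)\unlhd M$ is a $p$-group and $p\in\pi$, Lemma~\ref{L_Epi}(\ref{Dpi_comp_crit}) reduces $M\in\D_\pi$ to $\overline M=M/R_u(P)\in\D_\pi$, where $\overline M\le L$ contains the $\pi$-Hall subgroup $\overline H=H/R_u(P)$ of $L$. Each nonabelian composition factor $S_i$ of $L$ is a simple group of Lie type over the same field $\F_q$ of strictly smaller rank, and from the arithmetic description of simple $\D_\pi$-groups of Lie type in \cite{Revin2008e} one checks that $S_i\in\D_\pi$ (the relevant data --- the field $\F_q$, the inclusion $W(S_i)\le W(S)$, and the divisibility conditions on $\pi$ --- pass to $S_i$); by the induction hypothesis $S_i\in\U_\pi$. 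As the abelian composition factors of $L$ are trivially in $\U_\pi$, Theorem~\ref{reduction} yields $L\in\U_\pi$, whence $\overline M\in\D_\pi$ and finally $M\in\D_\pi$. The base of the induction is immediate: in rank $1$ a proper parabolic containing $U$ is a solvable Borel subgroup, and solvable groups satisfy $\D_\pi$.

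For the alternating groups I would invoke the explicit description of the pairs $(A_n,\pi)$ with $A_n\in\D_\pi$; in each case the $\pi$-Hall subgroups have a transparent shape in the natural permutation representation, and strong pronormality can be read off directly. For the sporadic groups the claim is a finite verification: one runs through the finitely many sporadic simple groups $S$ and the sets $\pi$ for which $S\in\D_\pi$, and checks strong pronormality of the $\pi$-Hall subgroups from the known data on maximal subgroups and conjugacy classes.

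The hard part is the Lie-type case in characteristic $p\in\pi$, and within it two points demand care. The first is the reduction ``a proper overgroup of a Sylow $p$-subgroup lies in a proper parabolic'': although standard, it must be applied carefully (for instance to ensure that the relevant subgroup has a nontrivial $p$-core, so that Borel--Tits applies). The second, and the real engine of the induction, is the assertion that the $\D_\pi$ property is inherited by the simple sections of the Levi factors: it is exactly this that lets the rank decrease while the base field $\F_q$, the Weyl-group condition, and the divisibility conditions defining $\D_\pi$ remain in force, and it is here that the precise arithmetic classification of simple $\D_\pi$-groups of Lie type is indispensable.
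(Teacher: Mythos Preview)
The paper does not prove this lemma; it is quoted verbatim as \cite[Theorem~4]{VdovinManRevin2012} and used as a black box. There is therefore no proof in the present paper to compare your proposal against.

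Your outline is essentially sound, and the parabolic-reduction/rank-induction argument for the Lie type case with $p\in\pi$ is a natural strategy. A few comments on points that would need tightening in an actual proof:

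\begin{itemize}
\item The key structural input ``a proper subgroup $M$ containing a Sylow $p$-subgroup lies in a proper parabolic'' is correct, but it is not a direct consequence of the Borel--Tits theorem in the usual formulation (which concerns $p$-local subgroups, and $O_p(M)\ne 1$ is not obvious a priori for an arbitrary overgroup of $U$). The cleanest route is to pass to a maximal overgroup and invoke the standard fact that maximal subgroups of $p'$-index in a finite simple group of Lie type are exactly the maximal parabolics. You flag this point yourself, so you are aware it needs care.
\item In the reduction step you need $\overline H$ to be a $\pi$-Hall subgroup of the Levi $L=P/R_u(P)$, not merely of $\overline M$; this holds because $H$ is $\pi$-Hall already in $G$, hence in $P$, but it should be said explicitly (otherwise $L\in\U_\pi$ tells you nothing about~$\overline M$).
\item The inheritance of Condition~I by the simple factors $S_i$ of the Levi is indeed immediate: the base field is unchanged and $W(S_i)$ embeds in $W(S)$, so both requirements of Condition~I descend.
\end{itemize}

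Your opening reduction via strong pronormality is correct but, as you implicitly notice, it is not actually used in your Lie-type argument, which establishes $M\in\D_\pi$ directly. For the alternating and sporadic groups your proposal amounts to ``case analysis using the known classification of $\D_\pi$-pairs''; this is exactly what is done in \cite{VdovinManRevin2012}, and there is no shortcut, but your description is too sketchy to count as a proof.
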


\begin{lem}\label{max}{\em \cite[Lemma~3]{Manzaeva2014}}
The following statements are equivalent.
\begin{enumerate}[{\em(a)}]
\item $\D_\pi=\U_\pi$.
\item In every simple $\D_\pi$-group $G$, all maximal subgroups containing
    a $\pi$-Hall subgroup of $G$ satisfy $\D_\pi$.
\end{enumerate}
\end{lem}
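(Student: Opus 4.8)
The plan is to prove Lemma~\ref{max}, the equivalence
$\D_\pi=\U_\pi\iff$ statement (b), by exploiting the two reduction results already
available in the excerpt, namely Lemma~\ref{L_Epi}(\ref{Dpi_comp_crit}) and
Theorem~\ref{reduction}. The implication (a)$\Rightarrow$(b) is the easy
direction and I would dispose of it first: if $\D_\pi=\U_\pi$, take any simple
$\D_\pi$-group $G$ and a maximal subgroup $M$ containing a $\pi$-Hall subgroup $H$
of $G$. Since $G\in\D_\pi=\U_\pi$, by the very definition of the class $\U_\pi$
every overgroup of $H$ — in particular the maximal overgroup $M$ — satisfies
$\D_\pi$. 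Thus (b) holds verbatim. So the content of the lemma is entirely in the
reverse implication.

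For (b)$\Rightarrow$(a) I would argue by contradiction, picking a counterexample
$G$ of minimal order to the inclusion $\D_\pi\subseteq\U_\pi$ (the reverse
inclusion $\U_\pi\subseteq\D_\pi$ is immediate from the definition of $\U_\pi$, so
only $\D_\pi\subseteq\U_\pi$ is at issue). By Theorem~\ref{reduction}, $\U_\pi$ is
closed under taking composition factors and is detected on them; combined with
Lemma~\ref{L_Epi}(\ref{Dpi_comp_crit}), which says the same for $\D_\pi$, any group
all of whose composition factors are simple $\U_\pi$-groups already lies in
$\U_\pi$. Hence a minimal counterexample must itself be forced to be as simple as
possible: I expect to show that $G$ may be taken to be a simple group. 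Concretely,
if $G$ were not simple one would find a proper composition factor, which is a
smaller $\D_\pi$-group and hence (by minimality and the two reduction theorems)
lies in $\U_\pi$; feeding all composition factors back through
Theorem~\ref{reduction} would give $G\in\U_\pi$, a contradiction. So the minimal
counterexample $G$ is a simple $\D_\pi$-group that fails $\U_\pi$.

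It then remains to derive a contradiction with hypothesis (b). Since $G\notin\U_\pi$
while $G\in\D_\pi$, by the definition of $\U_\pi$ there is an overgroup of a
$\pi$-Hall subgroup $H$ of $G$ that is not a $\D_\pi$-group; choose such an overgroup
$M$ of maximal possible order among those that fail $\D_\pi$. If $M=G$ this
contradicts $G\in\D_\pi$, so $M$ is proper and sits inside some maximal subgroup
$M^*$ of $G$ with $H\le M\le M^*$. By hypothesis (b), $M^*$ satisfies $\D_\pi$. Now
$H$ is a $\pi$-Hall subgroup of $M^*$ as well (its index in $M^*$ divides the
$\pi'$-number $|G:H|$), and $M$ is an overgroup of $H$ inside $M^*$ with
$|M|<|M^*|$. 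Since $|M^*|<|G|$, minimality of the counterexample applies to $M^*$:
as a $\D_\pi$-group strictly smaller than $G$ it lies in $\U_\pi$, whence every
overgroup of its $\pi$-Hall subgroup $H$ — in particular $M$ — satisfies $\D_\pi$,
contradicting the choice of $M$.

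I expect the main obstacle to be purely bookkeeping rather than conceptual: one must
be careful that the notion of $\pi$-Hall subgroup is correctly inherited along the
chain $H\le M\le M^*\le G$, i.e. that $H$ really is a $\pi$-Hall subgroup of each
intermediate group, and that the two closure statements
(Theorem~\ref{reduction} and Lemma~\ref{L_Epi}(\ref{Dpi_comp_crit})) are invoked in
a logically consistent order so that the minimal-counterexample reduction to a simple
group is airtight. The delicate point is ensuring the reduction genuinely lands on a
\emph{simple} $G$ so that hypothesis (b), which speaks only about simple
$\D_\pi$-groups, can be applied; everything else is a clean induction on $|G|$ using
that $\D_\pi$ and $\U_\pi$ are both composition-factor properties.
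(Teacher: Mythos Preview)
The paper does not supply its own proof of this lemma: it is quoted verbatim from \cite[Lemma~3]{Manzaeva2014} and used as a black box. So there is nothing in the present paper to compare your argument against.

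That said, your proof is correct and is exactly the natural one. The implication (a)$\Rightarrow$(b) is immediate. For (b)$\Rightarrow$(a), your minimal-counterexample reduction to a simple group via Theorem~\ref{reduction} and Lemma~\ref{L_Epi}(\ref{Dpi_comp_crit}) is sound, and the final step --- trapping the bad overgroup $M$ inside a maximal $M^*$, using hypothesis (b) to get $M^*\in\D_\pi$, and then minimality of $|G|$ to get $M^*\in\U_\pi$ --- works cleanly. One cosmetic remark: you do not actually need to choose $M$ of \emph{maximal} order among the bad overgroups; any bad overgroup will do, since you immediately pass to a maximal subgroup $M^*$ containing it anyway.
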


\begin{lem}\label{2inpi}{\em \cite[Theorem 1]{Manzaeva2014}}
If $2\in \pi$ then  $\D_\pi= \U_\pi$.
\end{lem}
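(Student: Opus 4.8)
The plan is to prove the nontrivial inclusion $\D_\pi\subseteq\U_\pi$; the reverse inclusion $\U_\pi\subseteq\D_\pi$ holds by the very definition of $\U_\pi$. By Lemma~\ref{max}, the equality $\D_\pi=\U_\pi$ is \emph{equivalent} to the following local assertion about simple groups: in every simple $\D_\pi$-group $S$, each maximal subgroup $M$ that contains a $\pi$-Hall subgroup $H$ of $S$ again satisfies $\D_\pi$. Since our $\pi$ is fixed with $2\in\pi$, every simple $\D_\pi$-group occurring here automatically has $2\in\pi$, so it suffices to verify this ``one step up'' property for all of them.

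First I would record two reductions. Every non-abelian simple group has even order, so $2\in\pi\cap\pi(S)$; if $\pi\cap\pi(S)=\{2\}$, then a $\pi$-Hall subgroup of $S$ is simply a Sylow $2$-subgroup, every overgroup $M$ has $\pi\cap\pi(M)\subseteq\{2\}$, and $\D_{\{2\}}$ holds for all finite groups by Sylow's theorem; thus this case is trivial and we may assume $|\pi\cap\pi(S)|\ge 2$. Next, from $|S:H|=|S:M|\,|M:H|$ it follows that $H$ is also a $\pi$-Hall subgroup of $M$, so $M\in\E_\pi$, and by Lemma~\ref{L_Epi}\,(f) it then remains only to check that \emph{every composition factor of $M$ is a $\D_\pi$-group}.

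The core of the argument is a case analysis whose feasibility rests on the hypothesis $2\in\pi$. The idea is: (i) invoke the known classification of simple $\D_\pi$-groups $S$ with $2\in\pi$ and $|\pi\cap\pi(S)|\ge 2$ to pin down the isomorphism type of $S$ and the structure of $H$; (ii) using the description of the maximal subgroups of $S$ (O'Nan--Scott for alternating groups, Aschbacher's classes together with the tables of maximal subgroups for groups of Lie type, and the known lists for sporadic groups), determine which maximal $M$ can contain $H$; and (iii) for each such $M$ read off its composition factors and confirm that each is a $\D_\pi$-group, either from the arithmetic $\D_\pi$-criteria or, whenever $H$ turns out to be nilpotent, at a single stroke via Lemma~\ref{L_Epi}\,(e). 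The leverage provided by $2\in\pi$ is that even-order $\pi$-Hall subgroups of simple groups are extremely rare and rigid: the classification leaves only a short list of pairs $(S,\pi)$, and in each case $H$ lies inside a narrow family of ``large'' maximal subgroups (parabolic subgroups or torus normalizers in the Lie type case, intransitive or imprimitive subgroups in the alternating case), so that the candidate overgroups $M$ are few and explicit.

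The main obstacle I expect is step (iii) for the groups of Lie type. Here a maximal overgroup $M$ may itself involve composition factors of Lie type---over a subfield, or a Levi or classical section---and one must check, against the arithmetic conditions that govern the $\D_\pi$-property for simple Lie-type groups, that each such factor again satisfies $\D_\pi$. Carrying this out uniformly across the relevant families, rather than group by group, is the technical heart of the proof; by comparison the alternating and sporadic groups reduce to a bounded, finite verification.
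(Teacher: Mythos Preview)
The paper does not prove Lemma~\ref{2inpi}: it is quoted without proof as \cite[Theorem~1]{Manzaeva2014}, serving as a preliminary result that disposes of the case $2\in\pi$ so that the present paper can focus entirely on $2\notin\pi$. So there is no ``paper's own proof'' to compare your proposal against.

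That said, your outline is precisely the strategy that the cited paper \cite{Manzaeva2014} follows, and it is the mirror image of what the present paper does in Section~3 for the case $2\notin\pi$: reduce via Lemma~\ref{max} to maximal overgroups of a $\pi$-Hall subgroup in a simple $\D_\pi$-group, then run through the classification of simple $\D_\pi$-groups and the relevant maximal-subgroup theorems. Your sketch is accurate at the level of strategy, and your remark that $2\in\pi$ makes even-order $\pi$-Hall subgroups in simple groups rare and rigid is exactly the leverage exploited in \cite{Manzaeva2014}. But what you have written is a plan, not a proof: the actual execution of step~(iii)---verifying the $\D_\pi$-property for every composition factor of every candidate $M$---is the substance of \cite{Manzaeva2014} and cannot be waved through in a paragraph. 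If you were asked to supply a proof here, the honest answer is to cite \cite{Manzaeva2014}; if you were asked to reproduce that proof, you would need to carry out the case analysis in full.
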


 In view of Lemma~\ref{2inpi}, we consider the case when $\pi$ is a set of odd
primes.

\begin{lem} \label{t.2.1}  {\em \cite[Theorem 1]{VdovinRevin2002}}
  Let $G$~ be a group of Lie type in characteristic~$p$. Suppose that
$2,p\notin\pi$ and $H$~is a~$\pi$-Hall subgroup of~$G$. Set $r=
\min(\pi\cap\pi(G))$ and  $\tau=(\pi\cap \pi(G))\setminus\{r\}$. Then $H$ has
a normal abelian  $\tau$-Hall subgroup.
\end{lem}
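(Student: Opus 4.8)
The plan is to reduce to a convenient (quasi)simple version of $G$ and then exploit the standard description, in cross characteristic, of Sylow subgroups of groups of Lie type in terms of tori, reading off the structure of $H$ from the minimality of $r$. Since $2\notin\pi$ the subgroup $H$ has odd order, hence is solvable by the Feit--Thompson theorem; this lets me use Hall's theorems inside $H$ and, in particular, produce a $\tau$-Hall subgroup $K$ of $H$, so that only its normality and commutativity are at issue. Using Lemma~\ref{L_Epi} I may pass freely between $G$, its normal subgroups and its homomorphic images, so it suffices to work in a version where maximal tori and their normalizers are explicit (for instance $\GL_n^\eta(q)$, $\Sp_{2n}(q)$, or the relevant (quasi)simple classical or exceptional group).

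First I would fix, for each $t\in\pi\cap\pi(G)$, the number $e=e(q,t)$ and recall that a Sylow $t$-subgroup of $G$ lies in the normalizer $N_G(T)$ of a maximal torus $T$ carrying a Sylow $\Phi_e$-torus, with $T\cap P$ abelian for $P=\mathrm{Syl}_t$ and $PT/T$ contained in the relative Weyl group $N_G(T)/C_G(T)$, a group of wreath type $C_d\wr S_m$ with $m$ the multiplicity of $\Phi_e$ in the order polynomial. The key numerical input is Lemma~\ref{r4ast'}: its product formulas give $|G|_t=(q^{e}-1)_t^{[\mathrm{rk}/e]}([\mathrm{rk}/e]!)_t$ (and the $\eta=-$ analogue), exhibiting the split into a toral factor and the Weyl factor $([\mathrm{rk}/e]!)_t$. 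Hence a Sylow $t$-subgroup is abelian and toral precisely when $([\mathrm{rk}/e]!)_t=1$, i.e. when $t>[\mathrm{rk}/e(q,t)]$.

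The heart of the argument is to show that the existence of $H$, together with $r=\min(\pi\cap\pi(G))$, forces every $t\in\tau$ to be toral in this sense, while permitting the exceptional prime $r$ to use the Weyl factor. I would argue that if some $t\in\tau$ violated $t>[\mathrm{rk}/e(q,t)]$, then the Weyl factor $([\mathrm{rk}/e]!)_t$ would contribute a non-abelian section isomorphic to a Sylow $t$-subgroup of a symmetric group $S_m$ with $m\ge t>r$, and one shows that combining this with a Sylow $r$-subgroup inside a single $\pi$-subgroup of order $|G|_\pi$ contradicts $G\in\E_\pi$. Granting this, all Sylow $t$-subgroups for $t\in\tau$ are abelian and lie in a common maximal torus $T_0$ which (because $r$ either shares the same $e$ or is handled as below) can be chosen normalized by $H$; then $K:=H\cap T_0$ is abelian, normal in $H$, and by the computation above $|K|_\tau=|G|_\tau=|H|_\tau$, so $K$ is the desired normal abelian $\tau$-Hall subgroup of $H$, with $H/K$ an $r$-group.

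I expect the main obstacle to be twofold. The genuinely delicate point is the ``mixed'' situation in which $e(q,r)\ne e(q,t)$ for $t\in\tau$ (the configurations of the type recorded in Condition~II, where the coincidence $(q^{r-1}-1)_r=r$ makes the smallest prime straddle two values of $e$); there $r$ and the primes of $\tau$ naturally live in different tori, and one must verify by hand that the compatibility constraints force $H$ to be (almost) cyclic, so that $K$ remains abelian and normal. The second obstacle is the case analysis itself: each Lie type needs its own evaluation of $|G|_t$ and of the pertinent relative Weyl group via Lemma~\ref{r4ast'}, and the groups ${}^2B_2(q)$, ${}^2G_2(q)$, ${}^2F_4(q)'$ must be treated separately, since their order polynomials do not fit the generic $e(q,t)$ formalism and are instead handled through the explicit cyclic structure of their $\pi$-Hall subgroups (as in the configurations of Condition~IV).
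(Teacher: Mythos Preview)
The paper does not prove this lemma: it is quoted verbatim as \cite[Theorem~1]{VdovinRevin2002} and used as a black box, with no argument given. So there is no ``paper's own proof'' to compare your proposal against.

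As for the proposal itself, the overall architecture---locate $H$ inside the normalizer of a suitable torus, show the $\tau$-part is toral (hence abelian and normal), and let $r$ absorb any Weyl contribution---is indeed the shape of the argument in the cited source. But the step you flag as ``the heart of the argument'' is also where your sketch is thinnest: you assert that if some $t\in\tau$ had $t\le[\mathrm{rk}/e(q,t)]$ then ``combining this with a Sylow $r$-subgroup inside a single $\pi$-subgroup of order $|G|_\pi$ contradicts $G\in\E_\pi$'', yet this is precisely the content of the theorem and does not follow from the existence of $H$ by any soft argument. The actual proof in \cite{VdovinRevin2002} proceeds type by type, determining for each family exactly which tori can carry a $\pi$-Hall subgroup and verifying by explicit order computations (of the flavour of Lemma~\ref{r4ast'}) that the Weyl factor for primes $t>r$ vanishes; the ``mixed'' cases you anticipate (Condition~II) and the Suzuki--Ree groups (Condition~IV) are handled separately, much as you predict. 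So your plan is sound as a roadmap, but you should be aware that the missing step is not a short lemma---it is the bulk of the cited paper.
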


In spite of Lemma \ref{t.2.1}, we say that a pair $(G,\pi)$ {\it satisfies}
\eqref{*} if

\begin{equation*}\label{*}
\tag{$*$}
  \begin{split}
   \text{every }\pi\text{-subgroup of }G\text{ has a n}&\text{ormal abelian }\tau\text{-Hall subgroup,}\\ \text{ where  }r=\min(\pi\cap\pi(G))&\text{ and
}\tau=(\pi\cap\pi(G))\setminus \{r\}.
 \end{split}
 \end{equation*}

Suppose that $G\in \E_\pi$ is a group of Lie type in characteristic~$p$ and
$2,p \notin \pi$. If $G \in \D_\pi$ then every $\pi$-subgroup of $G$ is
contained in a $\pi$-Hall subgroup of $G$, and hence by Lemma~\ref{t.2.1} we
have that $(G,\pi)$ satisfies \eqref{*}.  The following lemma gives
sufficient conditions for the validity of the converse statement.

\begin{lem}  \label{dpi_not2}{\em \cite[Theorem 5]{VdovinRevin2002}}
Let $G$~be a group of Lie type with the base field $\F_q$ of characteristic
$p$, and $G$ is not isomorphic to ${^2B}_2(q),{^2G}_2(q),{^2F}_4(q)'$.
Suppose that $2,p \not \in \pi$ and $G\in \E_\pi$. Assume further that
$e(q,t)=e(q,s)$ for every $t,s\in\pi\cap\pi(G)$. Then $G\in \D_\pi$ if and
only if $(G,\pi)$ satisfies \eqref{*}.
\end{lem}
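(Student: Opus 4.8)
The forward implication is already observed before the statement: if $G\in\D_\pi$ then every $\pi$-subgroup lies in a $\pi$-Hall subgroup, so by Lemma~\ref{t.2.1} the pair $(G,\pi)$ satisfies \eqref{*}. The plan is therefore to prove the converse, assuming $G\in\E_\pi$, $2,p\notin\pi$, a common value $e=e(q,t)$ for all $t\in\pi\cap\pi(G)$, and \eqref{*}. Write $r=\min(\pi\cap\pi(G))$ and $\tau=(\pi\cap\pi(G))\setminus\{r\}$; we may assume $\tau\ne\varnothing$, since otherwise $\D_\pi$ reduces to Sylow's theorem for $r$. First I would record that, by Lemma~\ref{L_Epi}(\ref{epi=cpi}), the hypothesis $G\in\E_\pi$ already gives $G\in\C_\pi$; hence, by the remark that $\D_\pi$ is equivalent to conjugacy of maximal $\pi$-subgroups, it suffices to prove that every maximal $\pi$-subgroup $M$ of $G$ is a $\pi$-Hall subgroup. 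Next, let $H$ be a $\pi$-Hall subgroup and let $A$ be its normal abelian $\tau$-Hall subgroup from Lemma~\ref{t.2.1}. Since $|G:A|=|G:H|\cdot|H:A|$ is a product of a $\pi'$-number and a power of $r$, it is a $\tau'$-number, so $A$ is an \emph{abelian $\tau$-Hall subgroup of $G$}. Being nilpotent, $A$ forces $G\in\D_\tau$ by Lemma~\ref{L_Epi}(\ref{dpi_nil}); in particular all $\tau$-Hall subgroups of $G$ are conjugate to $A$.

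Now fix a maximal $\pi$-subgroup $M$. By \eqref{*}, $M$ has a normal abelian $\tau$-Hall subgroup $M_\tau=O_\tau(M)$ with $M/M_\tau$ an $r$-group. Using $G\in\D_\tau$ I would conjugate $M$ so that $M_\tau\le A$; then $A\le C_G(M_\tau)\le N:=N_G(M_\tau)$, so both $M$ and $A$ lie in $N$. Since $M$ is a maximal $\pi$-subgroup of $G$ contained in $N$, it is a maximal $\pi$-subgroup of $N$; because $M_\tau\unlhd N$ is a $\tau$-group and the product of any normal $\tau$-subgroup of $N$ with $M$ is again a $\pi$-subgroup, maximality gives $M_\tau=O_\tau(N)$ and shows that $M/M_\tau$ is a Sylow $r$-subgroup of $N/M_\tau$. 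Consequently
\[
|M|=|M_\tau|\cdot|N|_r\le|G|_\tau\cdot|G|_r=|G|_\pi=|H|,
\]
with equality (i.e.\ $M$ a $\pi$-Hall subgroup) precisely when $|M_\tau|=|G|_\tau$ and $|N_G(M_\tau)|_r=|G|_r$. Thus everything reduces to establishing these two identities for a maximal $M$.

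This is where the structure of $G$ and the common value $e$ enter. As $p\notin\pi$, every $\pi$-element is semisimple, so $M_\tau$ lies in a maximal torus. The hypothesis $e(q,t)=e$ for all $t\in\pi\cap\pi(G)$ means that the whole $\tau$-part of $|G|$ is the $\tau$-part of the $\Phi_e$-contribution to the maximal-torus orders (in twisted types one works with $e^*$ and the corresponding formulas of Lemma~\ref{r4ast'}). I would use the explicit orders of maximal tori, together with the $r$- and $t$-part computations of Lemma~\ref{r4ast'}, to single out a maximal torus $T$ whose order carries the full $\tau$-part and to show that a conjugate of $A$ is the $\tau$-Hall subgroup of $T$, while the relative Weyl group $N_G(T)/T$ supplies a Sylow $r$-subgroup of order $|G|_r$. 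Concretely, the goal is to prove that $N_G(A)$ contains both a full $\tau$-Hall subgroup and a full Sylow $r$-subgroup of $G$, so that $H$ may be taken inside $N_G(A)$ with $H\cap T=A$ and $H/A$ a Sylow $r$-subgroup of $N_G(T)/T$. Feeding back the identities $|A|=|G|_\tau$ and $|N_G(A)|_r=|G|_r$, and comparing with the bound for $M$ above, would force $M_\tau$ to be a full $\tau$-Hall subgroup (hence conjugate to $A$) and $|N|_r=|G|_r$, so that $|M|=|G|_\pi$ and $M$ is a $\pi$-Hall subgroup, completing the converse.

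The main obstacle is exactly the verification of these two order identities. They do not hold for trivial reasons: property \eqref{*} descends to the sections $N_G(M_\tau)/M_\tau$, so it does not by itself exclude the degenerate configuration in which $N_G(M_\tau)$ has a non-normal $\tau$-Hall subgroup and a maximal $\pi$-subgroup that is merely an $r$-group—precisely the configuration that would produce a maximal $\pi$-subgroup smaller than $H$, i.e.\ a failure of $\D_\pi$. It is the common value $e$ that rules this out, and establishing this requires a family-by-family analysis over the untwisted and twisted classical groups and the exceptional groups (the Suzuki and Ree groups being excluded by hypothesis), using the maximal-torus orders and the cyclotomic estimates of Lemma~\ref{r4ast'}. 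I expect the exceptional types and the borderline divisibility inequalities to be the most laborious part, the common-$e$ hypothesis being exactly what concentrates the entire $\tau$-part into a single torus and the entire $r$-part into $N_G(A)$.
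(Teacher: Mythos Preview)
The paper does not give its own proof of this lemma: it is simply quoted from \cite[Theorem~5]{VdovinRevin2002}, so there is nothing in the present paper to compare your argument against. What you have written is a plausible reduction strategy rather than a proof, and you recognize this yourself in the last two paragraphs.

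Your reduction is essentially sound up to the displayed inequality. The step $M_\tau=O_\tau(N)$ and ``$M/M_\tau$ is a Sylow $r$-subgroup of $N/M_\tau$'' are correct for the reasons you indicate, so one does obtain $|M|=|M_\tau|\cdot|N|_r$. The real content, as you say, is then to force $|M_\tau|=|G|_\tau$ and $|N_G(M_\tau)|_r=|G|_r$ for \emph{every} maximal $\pi$-subgroup $M$, not just for one coming from a $\pi$-Hall subgroup. Note that your argument only gives $M_\tau\le A$ after conjugation; nothing general prevents $M_\tau\lneq A$, and in that situation $M$ would be a maximal $\pi$-subgroup strictly smaller than $H$. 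Ruling this out is exactly the place where the hypothesis that all $e(q,t)$ coincide must be used in an essential way, and it genuinely requires the torus/Weyl-group analysis family by family that \cite{VdovinRevin2002} carries out. So your outline identifies the right skeleton and the right obstacle, but the substantive work --- showing that every abelian $\tau$-subgroup normalized by a Sylow $r$-subgroup extends to a full $\tau$-Hall subgroup inside a single $\Phi_e$-torus, and that $N_G(A)$ already contains a Sylow $r$-subgroup of $G$ --- is deferred entirely to the case analysis you allude to. That case analysis is the theorem.
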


The next lemma says, when a simple group $S$ satisfies $\E_\pi$ and does not
satisfy~$\D_\pi$.

\begin{lem}\label{epi-dpi_lie} {\em (\cite[Theorem~1.1]{Gross1986},
\cite[Theorem~6.14]{Gross1986}, \cite[Lemmas~5-7]{Revin2009around})} Let $S$
be a simple group. Suppose that $2\notin \pi$  and $S \in \E_\pi\setminus
\D_\pi$. Set $r=\min(\pi\cap \pi(S))$ and $\tau=(\pi\cap
\pi(S))\setminus\{r\}$. Then one of the following holds.

\begin{enumerate}[{\em(I)}]
\item $S\simeq O'N$ and $\pi\cap\pi(S)=\{3,5\}$.
\item $S$~is a group of Lie type with the base field $\F_q$ of
    characteristic $p$ and either {\em(A)} or {\em(B)} below is true:
    \begin{enumerate}[{\em(A)}]
    \item $p\in \pi$, $p$ divides $|W(S)|$, every $t\in(\pi\cap
        \pi(S))\setminus \{p\}$ divides $q-1$ and does not divide
        $|W(S)|$.
    \item $p\notin \pi$ and one of {\em(a)-(i)} below holds.
        \begin{enumerate}[{\em(a)}]

        \item $S\simeq \PSL_n(q)$, $e(q,r)=r-1$, $(q^{r-1}-1)_r=r$,
            $\left[\frac{n}{r-1}\right]=\left[\frac{n}{r}\right]$ and
            for every
$t\in \tau$ we have $e(q,t)=1$ and $n<t$. \label{L_epi-dpi_2B_a}
        \item $S\simeq \PSU_n(q)$, $r\equiv1\,(\mathrm{mod}\, 4)$,
            $e(q,r)=r-1$, $(q^{r-1}-1)_r=r$,
            $\left[\frac{n}{r-1}\right]={\left[\frac{n}{r}\right]}$ and
            for every $t\in \tau$ we have $e(q,t)=2$ and $n<t$.
            \label{L_epi-dpi_2B_b}
        \item $S\simeq \PSU_n(q)$, $r\equiv3\,(\mathrm{mod}\, 4)$,
            $e(q,r)=\frac{r-1}{2}$, $(q^{r-1}-1)_r=r$,
            $\left[\frac{n}{r-1}\right]={\left[\frac{n}{r}\right]}$ and
            for every $t\in \tau$ we have $e(q,t)=2$ and $n<t$.
            \label{L_epi-dpi_2B_c}
        \item $S\simeq E_6(q)$, $\pi\cap\pi(S)\subseteq \pi(q-1)$, $3,
            13 \in \pi\cap\pi(S)$, $5\notin
            \pi\cap\pi(S)$.\label{L_epi-dpi_2B_d}
        \item $S\simeq {}^2E_6(q)$, $\pi\cap\pi(S)\subseteq \pi(q+1)$,
            $3, 13 \in \pi\cap\pi(S)$, $5\notin \pi\cap\pi(S)$.
        \item $S\simeq E_7(q)$, $\pi\cap\pi(S)$ is contained in one of
            the sets $\pi(q-1)$ or $\pi(q+1)$, $3, 13 \in
            \pi\cap\pi(S)$, $5,7 \notin \pi\cap\pi(S)$.
        \item $S\simeq E_8(q)$, $\pi\cap\pi(S)$ is contained in one of
            the sets $\pi(q-1)$ or $\pi(q+1)$, $3, 13 \in
            \pi\cap\pi(S)$, $5,7 \notin \pi\cap\pi(S)$.
        \item $S\simeq E_8(q)$, $\pi\cap\pi(S)$ is contained in one of
            the sets $\pi(q-1)$ or $\pi(q+1)$, $5, 31 \in
            \pi\cap\pi(S)$, $3,7 \notin \pi\cap\pi(S)$.
        \item $S\simeq F_4(q)$, $\pi\cap\pi(S)$ is contained in one of
            the sets $\pi(q-1)$ or $\pi(q+1)$, $3, 13 \in
            \pi\cap\pi(S)$. \label{L_epi-dpi_2B_i}
        \end{enumerate}
    \end{enumerate}
\end{enumerate}
\end{lem}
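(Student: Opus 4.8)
The plan is to combine the classification of finite simple groups (CFSG) with the two arithmetic criteria already at our disposal: the criterion for $S\in\D_\pi$ recorded in Lemma~\ref{Dpi} (Conditions~I--IV) and the known criterion for $S\in\E_\pi$. Since $2\notin\pi$, Lemma~\ref{L_Epi} gives $\E_\pi=\C_\pi$, so the hypothesis ``$S\in\E_\pi\setminus\D_\pi$'' means that $S$ possesses $\pi$-Hall subgroups, all of which are conjugate, yet some maximal $\pi$-subgroup lies in none of them. First I would dispose of the degenerate case $|\pi\cap\pi(S)|\le 1$: there a $\pi$-Hall subgroup is either trivial or a Sylow subgroup, so $S\in\D_\pi$ by Sylow's theorem and no such $S$ occurs. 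Hence I may assume $|\pi\cap\pi(S)|\ge 2$ throughout, which is exactly the hypothesis under which Lemma~\ref{Dpi} applies, and the task becomes to enumerate the simple groups that satisfy $\E_\pi$ while failing every one of Conditions~I--IV.

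Next I would run through the families. For alternating groups one shows that, with $2\notin\pi$, $\E_\pi$ already forces $\D_\pi$: an odd $\pi$-Hall subgroup of $A_n$ is assembled from wreath products of cyclic groups acting on disjoint supports, and Sylow-type conjugacy of these configurations yields $\D_\pi$, so alternating groups contribute nothing to the list. For the sporadic groups (and the Tits group) the verification is a finite computation from their known maximal-subgroup structure and order factorisations; the only surviving pair is $S\simeq O'N$ with $\pi\cap\pi(S)=\{3,5\}$, giving~(I). The remaining and principal work concerns groups of Lie type, which I split according to whether the defining characteristic $p$ lies in~$\pi$.

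If $p\in\pi$, a $\pi$-Hall subgroup arises from a parabolic-type subgroup containing a full Sylow $p$-subgroup, and $\E_\pi$ holds precisely when $\tau=(\pi\cap\pi(S))\setminus\{p\}\subseteq\pi(q-1)$ with no $t\in\tau$ dividing $|W(S)|$. Comparing this with Condition~I, the $\D_\pi$-property fails exactly when, in addition, $p$ itself divides $|W(S)|$; thus negating Condition~I inside $\E_\pi$ yields case~(A). If $p\notin\pi$, I would invoke Lemma~\ref{t.2.1}: the $\pi$-Hall subgroup $H$ has a normal abelian $\tau$-Hall subgroup sitting inside a maximal torus, and the arithmetic of both $\E_\pi$ and $\D_\pi$ is governed by the values $e(q,t)$ together with the $r$-parts and $t$-parts of $|S|$, computed via Lemma~\ref{r4ast'}. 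The decisive dichotomy is whether all $e(q,t)$, $t\in\pi\cap\pi(S)$, coincide. When they share a common value $c$, Lemma~\ref{dpi_not2} reduces $\D_\pi$ to condition~\eqref{*}, and $S\in\E_\pi\setminus\D_\pi$ is detected by the failure of \eqref{*} caused by an exceptional prime ($13$, $7$ or $31$) whose torus normaliser contains a Weyl-group element obstructing conjugacy; this produces the exceptional-type cases~(d)--(i). When the $e(q,t)$ are not all equal, existence of $H$ forces $r$ to divide $|W(S)|$ with $e(q,r)=r-1$ (respectively $\tfrac{r-1}{2}$ in the unitary case) while every $t\in\tau$ satisfies $e(q,t)\in\{1,2\}$; the constraints $(q^{r-1}-1)_r=r$, $[n/(r-1)]=[n/r]$ and $n<t$ are then read off from Lemma~\ref{r4ast'} as precisely the conditions under which a $\pi$-Hall subgroup exists yet a second, non-conjugate maximal $\pi$-subgroup (obtained by enlarging the $r$-part via an element of $W(S)$) also exists, yielding the linear and unitary cases~(a)--(c).

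The hardest part is the Lie-type analysis with $p\notin\pi$: one must track, family by family, the orders and $W(S)$-fusion of the maximal tori carrying the $\tau$-Hall subgroup, and isolate the exact thresholds (such as $n<t$, $n<2ct$, $2n<ct$ and the congruences on $r$ modulo~$4$) separating ``a $\pi$-Hall subgroup exists'' from ``all maximal $\pi$-subgroups are conjugate''. The exceptional groups demand individual treatment, since the operative obstruction there is the divisibility of $|W(S)|$ by the small primes $3,5,7,13,31$, which has to be checked case by case.
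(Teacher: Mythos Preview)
The paper does not prove this lemma at all: it is quoted verbatim as a known result, with the attribution ``\cite[Theorem~1.1]{Gross1986}, \cite[Theorem~6.14]{Gross1986}, \cite[Lemmas~5--7]{Revin2009around}'' standing in place of a proof. So there is no ``paper's own proof'' to compare against; the authors simply import the classification of simple $\E_\pi\setminus\D_\pi$-groups (for $2\notin\pi$) from the literature and use it as a black box in the argument for Theorem~\ref{main}.

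Your sketch is a plausible reconstruction of how that literature proceeds, and the global architecture (CFSG, dispose of alternating and sporadic groups, then split Lie type by whether $p\in\pi$) is correct. Two remarks on the details. First, your treatment of cases (d)--(i) via Lemma~\ref{dpi_not2} and the failure of \eqref{*} slightly inverts the logic: what actually singles out those exceptional cases is the comparison of the $\E_\pi$-criterion with Condition~III of Lemma~\ref{Dpi}, where the extra clauses ``if $r=3$ and $c=1$ then $5,13\notin\tau$'' etc.\ are exactly what can fail while $\E_\pi$ still holds; the appeal to \eqref{*} is a consequence rather than the mechanism. Second, for cases (a)--(c) your description of the obstruction (``a second, non-conjugate maximal $\pi$-subgroup obtained by enlarging the $r$-part via an element of $W(S)$'') is morally right but would need the explicit torus-and-Weyl-group bookkeeping carried out in Gross's paper to be made precise. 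None of this is a genuine gap in strategy; it just means that turning your outline into a proof amounts to redoing substantial portions of the cited references, which is why the present paper chose to cite rather than reprove.
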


\noindent{\bf Remark on Lemma~\ref{epi-dpi_lie}}. Consider simple classical
groups in characterictic~$p$. Suppose that $2,p\notin \pi$. If a simple
classical group satisfies $\E_\pi$ and
 does not satisfy $\D_\pi$,
then it must be linear or unitary by Lemma~\ref{epi-dpi_lie}. Thus if $S\in
\E_\pi$ is a  simple orthogonal or sympletic group, then $S\in \D_\pi$.
However, there are isomorphisms amongst the classical groups, and it may
happen that  a simple orthogonal or sympletic group $S\in \E_\pi$ is
isomorphic to a linear or unitary group $S_1$ (see \cite[Proposition
2.9.1]{KleiLie}). One can check in this case that $S_1\in \D_\pi$. For
instance, suppose that $S=\P\Omega^\pm_6(q)\simeq \PSL^\pm_4(q)=S_1$. Assume
also that $S_1\in \E_\pi\setminus \D_\pi$, and therefore $S_1$ satisfies one
of items \ref{L_epi-dpi_2B_a}-\ref{L_epi-dpi_2B_c} of
Lemma~\ref{epi-dpi_lie}. If $r=\min(\pi\cap\pi(S_1))$, we have that $r\le
n=4$ (see Lemma~\ref{nn} below) and so $r=3$. Then
$\left[\frac{n}{r-1}\right]\neq\left[\frac{n}{r}\right]$ and none of
conditions \ref{L_epi-dpi_2B_a}-\ref{L_epi-dpi_2B_c} holds. Hence we conclude
that $S_1=\PSL_4^\pm(q)\in \D_\pi$. Thus if $S$ is a simple orthogonal or
sympletic group  in characteristic~$p$,
 $\pi$ is a set of primes with $2,p\notin \pi$ and $S\in \E_\pi$, then $S \in
\D_\pi$.

We consider $\GL_{n}^-(q)$ as the set $\{(a_{ij})\in \GL_{n}(q^2)\mid
(a_{ij}^{q}) = ((a_{ij})^{-1})^{\top}\}$, where $(a_{ij})^{\top}=(a_{ji})$ is
the transposed of $(a_{ij})$. In the following statement, we specify the
structure of $\pi$-Hall subgroup in $\GL_n^\eta(q)$ in   case $2,p\notin \pi$
and $\GL_n^\eta(q)$ does not satisfy $\D_\pi$.

\begin{lem}\label{hall-gl}
Let $G=\GL_n^\eta(q)$, where $q=p^{m}$ and $p$ is a prime. Denote by $D$  the
subgroup of all diagonal matrices in $G$, so that  $D\simeq (q-\eta)^{n}$,
and by $P$ the subgroup of permutation matrices of $G$,  so that $P\simeq
S_{n}$ and $P$ normalizes~$D$. Suppose that $2,p\notin \pi$  and $G \in
\E_\pi\setminus \D_\pi$. Set $r=\min(\pi\cap \pi(G))$ and $\tau=(\pi\cap
\pi(G))\setminus\{r\}$. Then the following statements hold.
\begin{enumerate}[{\em(a)}]
 \item $r$ does not divide $q-\eta$, $\tau\subseteq\pi(q-\eta)$, and a
     $\pi$-Hall subgroup $T$ of $D$ is isomorphic to $(q-\eta)_\tau^n$.
 \item $P$ is a $\tau'$-group, a $\pi$-Hall subgroup of $P$ is nontrivial
     and coincides with a Sylow $r$-subgroup $R$ of $P$ and $R$ is
     elementary abelian of order $r^{[n/r]}$.
  \item $R$ normalizes $T$ and $TR$ is a $\pi$-Hall subgroup of~$G$. In
      particular, $|G|_\pi=(q-\eta)^n_\tau r^{[n/r]}$.
  \item  Consider the automorphism $\varphi:(a_{ij})\mapsto (a_{ij}^{p})$
      of $G$. Then $\varphi$ normalizes~$T$ and centralizes $R$.
  In particular, $\varphi$ normalizes $TR$.
  \item  Let $d=\left[\frac{n}{r}\right]$ and $n=dr+k$. Then
      $C_{TR}(R)\simeq (q-\eta)_\tau^{d+k}\times R$ and
      $m_t(C_{TR}(R))=d+k$ for every $t\in\tau$.
\end{enumerate}
\end{lem}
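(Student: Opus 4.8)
The plan is to read the governing arithmetic off Lemma~\ref{epi-dpi_lie} and then construct the Hall subgroup explicitly inside $D\rtimes P$. First I would reduce to the simple group $S=\PSL_n^\eta(q)$. Since $2\notin\pi$, Lemma~\ref{L_Epi}(\ref{epi=cpi}) gives $\E_\pi=\C_\pi$, so both $\E_\pi$ and $\D_\pi$ are determined by the composition factors, the latter by Lemma~\ref{L_Epi}(\ref{Dpi_comp_crit}); as the only non-abelian composition factor of $G$ is $S$ and every cyclic factor lies in $\D_\pi$, the hypothesis $G\in\E_\pi\setminus\D_\pi$ is equivalent to $S\in\E_\pi\setminus\D_\pi$. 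Hence $(S,\pi)$ falls under item (a) (if $\eta=+$) or (b)/(c) (if $\eta=-$) of Lemma~\ref{epi-dpi_lie}(II)(B). From these items I extract the four facts used throughout: $(q^{r-1}-1)_r=r$; $e(q,r)$ equals $r-1$ (linear, or unitary with $r\equiv1\pmod4$) or $\tfrac{r-1}{2}$ (unitary with $r\equiv3\pmod4$); $[n/(r-1)]=[n/r]$; and for every $t\in\tau$ one has $n<t$ together with $e(q,t)=1$ ($\eta=+$) or $e(q,t)=2$ ($\eta=-$).

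For (a): $e(q,t)\in\{1,2\}$ means $t\mid q-\eta$, so $\tau\subseteq\pi(q-\eta)$, while $e(q,r)\ge2$ forces $r\nmid q-\eta$ (the only borderline case $r=3$, $\eta=-$ being handled by noting $3\mid q-1$, which excludes $3\mid q+1$). As $D$ is abelian with $r\nmid|D|$, its $\pi$-Hall subgroup equals its $\tau$-Hall subgroup $T\cong(q-\eta)_\tau^n$. For (b): $n<t$ for all $t\in\tau$ means no prime of $\tau$ divides $n!$, so $P\cong S_n$ is a $\tau'$-group and $\pi\cap\pi(P)\subseteq\{r\}$; thus a $\pi$-Hall subgroup of $P$ is a Sylow $r$-subgroup $R$. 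The crucial elementary observation is that $[n/(r-1)]=[n/r]$ forces $[n/r]\le r-2$ and hence $n<r^2$: writing $d=[n/r]$, the two floor conditions intersect only on $dr\le n\le dr+(r-2-d)$, which is nonempty only when $d\le r-2$. Since $n<r^2$, the Sylow $r$-subgroup of $S_n$ is elementary abelian of order $r^{[n/r]}$, which is (b).

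For (c), (d), (e): because $R\le P$ permutes the diagonal coordinates it normalizes the characteristic subgroup $T$ of $D$, so $TR=T\rtimes R$ is a $\pi$-group of order $(q-\eta)_\tau^n\,r^{[n/r]}$. To see this equals $|G|_\pi$ I would compute $|G|_\pi=\big(\prod_{i=1}^n(q^i-\eta^i)\big)_\pi$ via Lemma~\ref{r4ast'}, noting $p\notin\pi$; in every sub-case $e^\ast=r-1$ (for unitary with $r\equiv3\pmod4$, $e$ is odd, so $e^\ast=2e=r-1$), whence the $r$-part is $r^{[n/(r-1)]}([n/(r-1)]!)_r=r^{[n/r]}$ using $(q^{r-1}-1)_r=r$ and $[n/r]<r$, and each $t\in\tau$ contributes $(q-\eta)_t^n\,(n!)_t=(q-\eta)_t^n$ since $t>n$; this gives $|G|_\pi=|TR|$, so $TR$ is a $\pi$-Hall subgroup. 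Part (d) is immediate: $\varphi$ fixes every permutation matrix (entries $0,1\in\F_p$) and so centralizes $R$, and it stabilizes $D$ and its characteristic subgroup $T$, hence normalizes $TR$. For (e) I use $TR=T\rtimes R$ to get $C_{TR}(R)=C_T(R)\times R$; the orbits of $R$ on the $n$ coordinates are the $d$ blocks of size $r$ together with the $k=n-dr$ fixed points, so the $R$-fixed points in $T\cong(q-\eta)_\tau^n$ are exactly the tuples constant on each orbit, giving $C_T(R)\cong(q-\eta)_\tau^{d+k}$; as every $t\in\tau$ divides $q-\eta$, this abelian factor has $t$-rank $d+k$ and $R$ contributes none, so $m_t(C_{TR}(R))=d+k$.

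The main obstacle is the arithmetic bookkeeping in (c): verifying $|G|_\pi=(q-\eta)_\tau^n\,r^{[n/r]}$ uniformly across the linear and the two unitary sub-cases, which hinges on the two reductions $e^\ast=r-1$ and $[n/r]<r$ (so that the factors $([n/r]!)_r$ and $(n!)_t$ are trivial). Everything else is the permutation action of the Sylow $r$-subgroup of $S_n$ on the diagonal torus.
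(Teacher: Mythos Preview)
Your proof is correct and follows essentially the same route as the paper: reduce to $S=\PSL_n^\eta(q)$ via Lemma~\ref{L_Epi}(\ref{epi=cpi},\ref{Dpi_comp_crit}), read the arithmetic off Lemma~\ref{epi-dpi_lie}(II)(B)(a)--(c), compute $|G|_\pi$ with Lemma~\ref{r4ast'} using $[n/(r-1)]=[n/r]<r-1$ and $(q^{r-1}-1)_r=r$, and for (e) analyse the orbits of $R$ on the diagonal coordinates. The only cosmetic slip is that in the linear case Lemma~\ref{r4ast'} uses $e$ rather than $e^*$, but your conclusion that the relevant exponent equals $r-1$ in all three sub-cases is exactly what the paper records as~(\ref{OrdersOfGLGU}).
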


\begin{proof}
 Since $G\in \E_\pi\setminus\D_\pi$ and $2\notin \pi$, Lemma
\ref{L_Epi}(\ref{epi=cpi}, \ref{Dpi_comp_crit}) implies that
 the unique nonabelian composition factor  $S=\PSL_n^\eta(q)$ of $G$ lies in
$\E_\pi\setminus\D_\pi$. Since $p\notin \pi$,
 we obtain that $S$ satisfies one of
items~\ref{L_epi-dpi_2B_a}-\ref{L_epi-dpi_2B_c} of Lemma~\ref{epi-dpi_lie}.
 Observe that $\pi(G)=\pi(S)$. Denote $e(q,r)$ by~$e$.

(a) Since $D\simeq (q-\eta)^n$ is abelian, $D$ satisfies $\D_\pi$.
Items~\ref{L_epi-dpi_2B_a}-\ref{L_epi-dpi_2B_c} of Lemma~\ref{epi-dpi_lie}
imply $\tau\subseteq\pi(q-\eta)$. It suffies, therefore, to prove that
  $r$ does not divide $q-\eta$.

  If $\eta=+$, then $S=\PSL_n(q)$ satisfies item
\ref{L_epi-dpi_2B_a}
  of Lemma~\ref{epi-dpi_lie}. It now follows that $e=r-1>1$, and so $r$ does not
divide $q-1$.

If $\eta=-$, then $S=\PSU_n(q)$ satisfies
  either~\ref{L_epi-dpi_2B_b} or \ref{L_epi-dpi_2B_c} of
Lemma~\ref{epi-dpi_lie}. It is easy to see  that in both cases $e$~is not
equal to 2,
  and so $r$ does not divide $q+1$. Observe also that in both cases $e^*=r-1$.
Thus $r\notin \pi(q-\eta)$, as required.

(b) It follows from items~\ref{L_epi-dpi_2B_a}-\ref{L_epi-dpi_2B_c} of
Lemma~\ref{epi-dpi_lie} that $n<t$ for every $t\in \tau$,
  and hence $P\simeq S_n$ is a $\tau'$-group. Then $|\pi\cap\pi(P)|\le 1$, and
$P$ satisfies $\D_\pi$ by the Sylow theorems.
  As we have seen above,  if $\eta=+$ then $e=r-1$, and if $\eta=-$ then
$e^*=r-1$.  Lemma~\ref{r4ast'} implies that
\begin{equation*}
\begin{split}
 |\GL_n(q)|_r=(q^e-1)_r^{\left[n/e\right]}\left(\left[n/e\right]!\right)_r,\\
|\GU_n(q)|_r=(q^{e^*}-(-1)^{e^*})_r^{\left[n/e^*\right]}\left(\left[n/e^*\right]!\right)_r.
\end{split}
\end{equation*}
Hence
\begin{equation}\label{OrdersOfGLGU}
 \vert \GL_n^\eta(q)\vert_r=(q^{r-1}-1)_r^{\left[n/(r-1)\right]} \left(\left[n/(r-1)\right]!\right)_r.
\end{equation}

 Since $r$ divides the order
  of $G$, we have that $n\ge r-1$. Also, since
$\left[\frac{n}{r-1}\right]=\left[\frac{n}{r}\right]$,
  we have that $n\ge r$, and so a Sylow $r$-subgroup $R$ of $P$ is nontrivial. Then $d=\left[\frac{n}{r}\right]>0$ and it follows from
$d=\left[\frac{n}{r-1}\right]$ that
\begin{equation}\label{n}
   n=dr+k=d(r-1)+(d+k) \text{ and } 0< d+k< r-1.
   \end{equation}
 In particular, $d< r-1$ and a Sylow $r$-subgroup $R$ of $P$ is isomorphic to
$r^d$ by \cite[11.3.1, Example~III]{KargMerz}. Moreover
\begin{equation*}
n=d(r-1)+(d+k)\leq d(r-1)+r-2\leq r(r-2).
\end{equation*}

  \medskip

(c)  Since $T$ is a characteristic
  subgroup of $D$, we conclude that $R$ normalizes $T$, and so $TR$ is a
$\pi$-subgroup of $G$. To prove that $TR$ is a $\pi$-Hall
  subgroup of~$G$,  it suffices to show that $|G|_\pi=|TR|=(q-\eta)^n_\tau
r^{[n/r]}$.

 Items~\ref{L_epi-dpi_2B_a}-\ref{L_epi-dpi_2B_c}
  of Lemma~\ref{epi-dpi_lie} yield $(q^{r-1}-1)_r=r$ and we have seen that
$\left[{n}/{(r-1)}\right]=d<r-1$. Thus,
  $$\left(\left[{n}/{(r-1)}\right]!\right)_r=1.$$
  In view of~(\ref{OrdersOfGLGU}),
  we conclude that $|G|_r=r^{[n/r-1]}=r^{[n/r]}$, as required.

   Calculate $t$-part of order of $G$ for every $t\in\tau$. It follows from
items~\ref{L_epi-dpi_2B_a}-\ref{L_epi-dpi_2B_c}
  of Lemma~\ref{epi-dpi_lie} that $e(q,t)=e(q,s)$ for every $t,s \in \tau$.
Denote $e(q,t)$ by~$f$. Observe that if $\eta=+$
  then $f=1$, and if $\eta=-$ then $f=2$ and $f^*=1$. Since $n<t$, Lemma~\ref{r4ast'}
yields

$$|\GL_n(q)|_t=(q^f-1)_t^{\left[{n}/{f}\right]}\left(\left[{n}/{f}\right]
!\right)_t=(q-1)^n_t(n!)_t=(q-1)^n_t$$


$$|\GU_n(q)|_t=(q^{f^*}-(-1)^{f^*})_t^{\left[n/f^*\right]}\left(\left[
n/f^*\right]!\right)_t=(q+1)^n_t(n!)_t=(q+1)^n_t$$
  Thus $|G|_\tau=(q-\eta)^n_\tau$, as required.

(d) Clearly, $\varphi$ normalizes $D$ and centralizes $P$. In particular,
$\varphi$ also centralizes~$R$. Since $T$ is a characteristic
  subgroup of $D$, we have that $\varphi$ normalizes~$T$.

(e) We will denote by $D_i$ the subgroup of $D$ consisting of all matrices
$\diag(1,\dots,\alpha,\dots, 1)$ where $\alpha$ is an element of the
corresponding field such that $\alpha^q=\alpha$ and $\alpha$ is placed on the
$i$th position. Let $T_i$ be the unique $\tau$-Hall subgroup of $D_i$. It is
clear that both $P$ and $R$ act on the set
$$\Omega=\{T_1,\dots, T_n\}$$
via conjugation. Since $R$ is a Sylow $r$-subgroup of $P\simeq S_n$, and in
view of~(\ref{n}),  $R$ has $d$ orbits of size $r$ and $k$ orbits of size 1
on~$\Omega$ (see  \cite[11.3.1, Example~III]{KargMerz}). It is easy to see
that if $\Delta$ is an orbit of $R$ on $\Omega$ then
$$
C_{\langle\Delta\rangle}(R)\simeq (q-\eta)_\tau \text{ and }$$
$$C_T(R)=\langle C_{\langle\Delta\rangle}(R)\mid \Delta \text{ is an orbit of }
R \text{ on }\Omega\rangle\simeq
(q-\eta)^{d+k}_\tau.
$$
  Since $R$ is abelian, we obtain that $$C_{TR}(R)\simeq
(q-\eta)_\tau^{d+k}\times R.$$ Thus, for every $t\in \tau$, the maximal rank
of elementary
  abelian $t$-subgroup $m_t(C_{TR}(R))$ is equal to $d+k$.
  \end{proof}

Since $\PSL_n^\eta(q)$ is a unique nonabelian composition factor of
$\GL_n^\eta(q)$, as a consequence of  (a) and the proof of  (b)  we obtain

\begin{lem}\label{nn}
Let $S=\PSL_n^\eta(q)$, where $q=p^m$, and let  $2,p\not\in\pi$. Set
$r=\min(\pi\cap\pi(S))$. Then $S\in \E_\pi\setminus \D_\pi$ implies
  $\gcd(n,q-\eta)_\pi=1$ and $r\le n\le r(r-2)$.
 \end{lem}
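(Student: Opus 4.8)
\medskip

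The statement follows directly from Lemma~\ref{hall-gl}, which has just been established for $G = \GL_n^\eta(q)$, together with the observation that $S = \PSL_n^\eta(q)$ is the unique nonabelian composition factor of $G$. The plan is to transfer the hypothesis $S \in \E_\pi \setminus \D_\pi$ up to the group $G$, apply the conclusions of Lemma~\ref{hall-gl}, and then read off the two claimed arithmetic facts. First I would note that, since $G = \GL_n^\eta(q)$ is an extension of the scalar subgroup and the derived subgroup $\SL_n^\eta(q)$ by cyclic factors, and $S$ is its unique nonabelian composition factor, Lemma~\ref{L_Epi}(\ref{epi=cpi},\ref{Dpi_comp_crit}) together with $2 \notin \pi$ shows that $G \in \E_\pi \setminus \D_\pi$ as well. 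Hence all conclusions (a)--(e) of Lemma~\ref{hall-gl} are available to us, and in particular $\pi(G) = \pi(S)$, so $r = \min(\pi \cap \pi(G))$.

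\medskip

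For the first assertion $\gcd(n, q-\eta)_\pi = 1$, I would argue that if some prime $t \in \pi$ divided both $n$ and $q-\eta$, we would derive a contradiction. By Lemma~\ref{hall-gl}(a), $r \nmid (q-\eta)$ and $\tau \subseteq \pi(q-\eta)$; since $\pi \cap \pi(G) = \{r\} \cup \tau$, a common prime divisor $t$ of $n$ and $q-\eta$ would have to lie in $\tau$. But the items \ref{L_epi-dpi_2B_a}--\ref{L_epi-dpi_2B_c} of Lemma~\ref{epi-dpi_lie}, which $S$ satisfies, guarantee $n < t$ for every $t \in \tau$; a prime $t$ with $t > n$ cannot divide $n$, a contradiction. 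Thus no prime of $\pi$ divides $\gcd(n, q-\eta)$, giving $\gcd(n, q-\eta)_\pi = 1$.

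\medskip

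For the inequalities $r \le n \le r(r-2)$, I would extract them from the displayed computations in the proof of Lemma~\ref{hall-gl}(b). There it is shown, using $\left[\frac{n}{r-1}\right] = \left[\frac{n}{r}\right]$ together with $r \mid |G|$, that $n \ge r$ and that $d = \left[\frac{n}{r}\right] = \left[\frac{n}{r-1}\right]$ satisfies $d < r-1$; writing $n = d(r-1) + (d+k)$ with $0 < d+k < r-1$ yields precisely $n \le d(r-1) + (r-2) \le (r-2)(r-1) + (r-2) = (r-1)(r-2) + (r - 2)$, wait — the clean bound recorded there is $n = d(r-1)+(d+k) \le d(r-1) + r - 2 \le r(r-2)$, using $d \le r-2$. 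Hence $r \le n \le r(r-2)$.

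\medskip

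I do not anticipate a serious obstacle here, since the lemma is essentially a corollary repackaging information already proven in Lemma~\ref{hall-gl}; the only point requiring a little care is the reduction step passing from the simple group $S$ to the linear group $G = \GL_n^\eta(q)$, where one must invoke the composition-factor criteria of Lemma~\ref{L_Epi} to ensure that the $\E_\pi \setminus \D_\pi$ hypothesis is genuinely inherited in the right direction and that $\pi(G) = \pi(S)$, so that the quantity $r$ is unambiguous across both groups.
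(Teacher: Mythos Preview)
Your proposal is correct and follows essentially the same approach as the paper: the paper states the lemma explicitly as a consequence of Lemma~\ref{hall-gl}(a) and the proof of Lemma~\ref{hall-gl}(b), together with the fact that $\PSL_n^\eta(q)$ is the unique nonabelian composition factor of $\GL_n^\eta(q)$, and your argument unpacks exactly these ingredients. The only minor remark is that your exposition of the upper bound $n\le r(r-2)$ is a bit hesitant; the clean derivation (already in the paper's proof of part~(b)) is simply $n=d(r-1)+(d+k)\le (r-2)(r-1)+(r-2)=r(r-2)$, using $d\le r-2$ and $d+k\le r-2$.
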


\begin{lem}\label{elab-gl}
Let $G=\GL_n^\eta(q)$, where $q=p^{m}$ and $p$ is a prime. Consider the
automorphism $\varphi:(a_{ij})\mapsto (a_{ij}^{p})$ of $G$. Suppose that
$2,p\notin \pi$  and $G \in \E_\pi\setminus \D_\pi$. Set $r=\min(\pi\cap
\pi(G))$ and $\tau=(\pi\cap \pi(G))\setminus\{r\}$.  Take arbitrary
$t\in\tau$. Then $G$ contains an $r$-subgroup $R$ and an elementary abelian
$t$-subgroup $K$ such that
\begin{enumerate}[{\em(a)}]
\item $R$ is a Sylow $r$-subgroup of $G$ centralized by $ \varphi_{2'}$;
\item $K$ is $\varphi$-invariant, $K\le C_G(R)$, and the rank of $K$ equals
    $2d+k$, where $d$ and $k$ are defined by $d=[n/r]$ and
    $k=n-dr$;\label{elab-gl_b}
\item if $m$ is divisible by $t$  and $\psi\in \langle\varphi\rangle$ is of
    order~$t$, then $K$ in {\rm(\ref{elab-gl_b})} can be chosen such that
    $K\le C_G(\psi)\cap C_G(R).$\label{elab-gl_c}
\end{enumerate}
\end{lem}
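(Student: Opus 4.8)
The plan is to exhibit both $R$ and $K$ inside the subgroups $D$ and $P$ already analysed in Lemma~\ref{hall-gl}, and to obtain $C_G(R)$ from the decomposition of the natural module into $R$-isotypic components. I would take $R$ to be precisely the Sylow $r$-subgroup of $P\simeq S_n$ furnished by Lemma~\ref{hall-gl}(b); since $|R|=r^{[n/r]}=|G|_r$ by Lemma~\ref{hall-gl}(c), this $R$ is already a Sylow $r$-subgroup of $G$, and it is elementary abelian. Part~(a) is then immediate: Lemma~\ref{hall-gl}(d) gives that $\varphi$ centralizes $P$, hence $R$, so a fortiori $\varphi_{2'}$ centralizes $R$.

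For (b) I would write $R=R_1\times\dots\times R_d$, where each $R_i\simeq C_r$ cyclically permutes the $r$ coordinates of the $i$-th $R$-orbit and fixes the rest (Lemma~\ref{hall-gl}(e) gives $d$ orbits of size $r$ and $k$ fixed coordinates). Decomposing the natural module $V$ over $\F_q$ ($\eta=+$) or $\F_{q^2}$ ($\eta=-$) into $R$-isotypic components, the trivial component has dimension $d+k$ (one fixed line from each block together with the $k$ fixed coordinates), while the nontrivial part of the $i$-th block is a single $\F_q$-irreducible of dimension $r-1$ with endomorphism field $\F_{q^{r-1}}$, because $e(q,r)=r-1$ (resp.\ $e(q^2,r)=(r-1)/2$ with the two Galois components swapped by the Hermitian form). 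Hence $C_G(R)\simeq \GL_{d+k}^\eta(q)\times\prod_{i=1}^d C_i$, where each $C_i$ is cyclic of order $q^{r-1}-1$. Now $t>n>d+k$, so the Sylow $t$-subgroup of $\GL_{d+k}^\eta(q)$ is the $t$-part of the diagonal torus and has rank $d+k$; and $(q^{r-1}-1)_t=(q-\eta)_t\ne 1$ by Lemma~\ref{r4ast'} (the factorial term vanishes since $t>r-1$), so each $C_i$ contributes a cyclic $t$-group of rank $1$. Letting $K$ be the product of the $t$-socle of that diagonal torus with the $t$-socles of the $C_i$, we get an elementary abelian $t$-group of rank $(d+k)+d=2d+k$ contained in $C_G(R)$. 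It is $\varphi$-invariant because $\varphi$ commutes with $R$ and acts coordinate-wise, so it preserves the trivial isotypic component and the nontrivial part of each block, and the $t$-socle is characteristic in each factor of $C_G(R)$. This proves (b), and we may keep the same $K$ for (c).

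For (c), suppose $t\mid m$ and let $\psi\in\langle\varphi\rangle$ have order $t$; its fixed field on the base field is $\F_{p^{m/t}}$ ($\eta=+$) or $\F_{p^{2m/t}}$ ($\eta=-$). Every element of $K$ is a semisimple $t$-element whose eigenvalues are $t$-th roots of unity lying in $\F_q$ ($\eta=+$) or $\F_{q^2}$ ($\eta=-$); indeed the order-$t$ elements of each $C_i$ are forced into the base field because $(q^{r-1}-1)_t=(q-\eta)_t$. It therefore suffices to check that the $t$-th roots of unity lie in the fixed field of $\psi$, i.e.\ that $e(p,t)$ divides $m/t$ (resp.\ $2m/t$). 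This is the arithmetic heart: from $t\mid q-\eta$ one gets $e(p,t)\mid 2m$, while $t\mid m$ and $e(p,t)\le t-1<t$ with $t$ prime give $\gcd(e(p,t),t)=1$, so $e(p,t)\cdot t\mid 2m$ and hence $e(p,t)\mid 2m/t$ (the linear case is identical with $2m$ replaced by $m$). Since the $R$-fixed basis used to diagonalize the $\GL_{d+k}^\eta$ factor and the idempotents realizing the $C_i$ are all defined over the prime field, the matrix entries of the chosen generators of $K$ stay in the fixed field of $\psi$; thus $\psi$ centralizes $K$, giving $K\le C_G(\psi)\cap C_G(R)$.

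The step I expect to be the main obstacle is the precise determination of $C_G(R)$ on the nontrivial part of a block in the unitary case: one must track how the Hermitian form pairs the two degree-$(r-1)/2$ Galois components of $\Phi_r$ over $\F_{q^2}$ and reconcile this with the values $e(q,r)$, $e(q^2,r)$ and $e^*=r-1$ coming from Lemma~\ref{hall-gl}(a) and the $e^*$-formula, in order to confirm that each block still contributes a single cyclic factor of order $q^{r-1}-1$ and hence $t$-rank exactly one. Once the centralizer is pinned down, the $\varphi$- and $\psi$-invariance reduce to the fact that $t$-torsion subgroups are characteristic together with the divisibility $e(p,t)\mid 2m/t$.
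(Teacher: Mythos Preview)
Your approach is correct and complete in outline, but it is genuinely different from the paper's and harder than necessary. The paper does \emph{not} take $R$ to be the permutation Sylow subgroup of Lemma~\ref{hall-gl}. Instead it uses the partition $n=d(r-1)+(d+k)$ to pass to the block--diagonal subgroup
\[
X=\GL_{r-1}^\eta(q)^{\,d}\times \GL_1^\eta(q)^{\,d+k},
\]
whose centre is visibly $(q-\eta)^{2d+k}$, and takes $K$ to be the $t$--socle of $Z(X)$. The subgroup $R$ is then chosen inside $Y=\GL_{r-1}^\eta(p)^{\,d}\times\GL_1^\eta(p)^{\,d+k}$ (one $r$--element per $\GL_{r-1}^\eta(p)$, which exists by Fermat); this is Sylow since $|G|_r=r^d$, it is centralized by $\varphi_{2'}$ because $Y$ is, and it commutes with $K$ simply because $K\le Z(X)$ and $R\le X$. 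Part~(c) is then immediate: with $q_0=p^{m/t}$ one has $q_0\equiv q\equiv\eta\pmod t$, so $K\le Z(X_0)$ for the analogous subgroup $X_0$ over $\F_{q_0}$, and $X_0\le C_G(\psi)$. No centralizer computation is needed at all, and the unitary case requires no separate analysis.

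Your route --- keeping the permutation $R$ and extracting $K$ from the isotypic decomposition of $C_G(R)$ --- also works, and the centralizer you write down is correct: each nontrivial block contributes a cyclic torus of order $q^{r-1}-1$ in both the linear and unitary cases (in the latter, the Hermitian form pairs the two $\F_{q^2}$--Galois pieces of $\Phi_r$, collapsing the $\GL$-centralizer $(q^{r-1}-1)^2$ to a single factor). Your arithmetic for~(c) is right as well. Two small points: your $\varphi$--invariance argument should say that the \emph{chosen diagonal torus} of $\GL_{d+k}^\eta(q)$ is $\varphi$--stable because its defining basis $\{v_i,e_j\}$ has $\F_p$--entries, not that ``the $t$--socle is characteristic in the factor'' (it is not); and the step you flag as the main obstacle is exactly the one the paper bypasses by choosing a different $R$.
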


\begin{proof} It follows from Lemma~\ref{hall-gl}(b) that $d=[n/r]>0$.
The equality  $n=d(r-1)+d+k$ implies that  $G$ contains a subgroup of
block-diagonal matrices
$$X=\underbrace{\GL_{r-1}^\eta(q)\times\ldots\times \GL_{r-1}^\eta(q)}_{d \text{
times}}\times \underbrace{\GL_1^\eta(q)\times\ldots\times\GL_1^\eta(q)}_{k+d
\text{ times}}.$$
This subgroup is $\varphi$-invariant. Every $\GL_{r-1}^\eta(q)$ contains a
subgroup $\GL_{r-1}^\eta(p)$  centralized by~$\varphi_{2'}$. Consider a
subgroup
$$Y=\underbrace{\GL_{r-1}^\eta(p)\times\ldots\times \GL_{r-1}^\eta(p)}_{d \text{
times}}\times
\underbrace{\GL_1^\eta(p)\times\ldots\times\GL_1^\eta(p)}_{k+d \text{ times}}$$
of~$X$. It follows from Fermat's little theorem that
$|\GL_{r-1}^\eta(p)|_r>1$. So, $Y$ contains an elementary abelian
$r$-subgroup $R$ of order $r^d$ and $R$ is centralized by $\varphi_{2'}$.
 By Lemma~\ref{hall-gl} we have that  $|G|_r=r^d$ and $R$ is a Sylow
$r$-subgroup of~$G$, as required.
  Observe that $Z(X)\simeq (q-\eta)^{2d+k}$. Lemma~\ref{hall-gl}(a) implies that
$t$ divides $q-\eta$. Thus the unique maximal elementary abelian $t$-subgroup
$K$ of $Z(X)$ is a desired subgroup.

If  $m$ is divisible by $t$, then $q=q_0^t$ where $q_0=p^{m/t}$. By Fermat's
little theorem $ q_0\equiv q\pmod t$
 and since
$q\equiv \eta \pmod t $ we obtain that
$$q_0\equiv\eta\pmod t.$$ Consider a subgroup $X_0$ such that $Y\le X_0\le X$
and
$$
X_0=\underbrace{\GL_{r-1}^\eta(q_0)\times\ldots\times \GL_{r-1}^\eta(q_0)}_{d
\text{ times}}\times \underbrace{\GL_1^\eta(q_0)\times\ldots
\times\GL_1^\eta(q_0)}_{k+d \text{ times}}.
$$
Clearly, $X_0\le\GL_n^\eta(q_0)= C_G(\psi)$. It is easy to see that
$Z(X_0)\le Z(X)$ and since $t$ divides $q_0-\eta$, we conclude that $K\le
Z(X_0)\le C_G(\psi)$.
\end{proof}

We also need some information about automorphisms of groups of Lie type. Let
$S$~be a simple group of Lie type. Definitions of diagonal, field and graph
automorphisms of $S$ agree with that of \cite{Steinberg}. The group of
inner-diagonal automorphisms of $S$ is denoted by  $\widehat{S}$. By
\cite[3.2]{Steinberg}, there exists a field automorphism $\rho$ of~$S$ such
that every automorphism $\sigma$ of $S$ can be written
$\sigma=\beta\rho^l\gamma$, with $\beta$ and  $\gamma$ being an
inner-diagonal and a graph automorphisms, respectively, and $l\ge 0$. The
group $\langle \rho\rangle$ is denoted by $\Phi_S$. In view of
\cite[7-2]{GorLyons1983}, the group $\Phi_S$ is determined up to
$\widehat{S}$-conjugacy.  Since $S$ is centerless, we can identify $S$ with
the group of its inner automorphisms.

\begin{lem}{\em  \cite[3.3, 3.4, 3.6]{Steinberg}}\label{automorphisms}
Let $S$~be a simple group of Lie type over $\F_q$ of characteristic $p$. Set
$A=\Aut(S)$ and $\widehat{A}=\widehat{S}\Phi_S$. Then the following
statements hold.
\begin{enumerate}[{\em(a)}]
\item $S\le \widehat{S} \le \widehat{A}\le A$ is a normal series for $A$.
\item  $\widehat{S}/S$ is abelian; $\widehat{S}=S$ for the groups $E_8(q),
    F_4(q), G_2(q),{}^3D_4(q)$, in other cases the order of $\widehat{S}/S$
    is specified in {\em Table~\ref{D}}.
    \begin{longtable}{cc}
\caption{} \label{D} \\ \hline $S$ & $|\widehat{S}/S|$ \\ \hline
        $A_l(q)$ & $\gcd(l+1,q-1)$ \\
        ${}^2A_l(q)$ & $\gcd(l+1,q+1)$ \\
        $B_l(q)$, $C_l(q)$, $E_7(q)$ & $\gcd(2,q-1)$ \\
        $D_l(q)$ & $\gcd(4,q^l-1)$ \\
        ${}^2D_l(q)$ & $\gcd(4,q^l+1)$ \\
        $E_6(q)$ & $\gcd(3,q-1)$ \\
        ${}^2E_6(q)$ & $\gcd(3,q+1)$ \\ \hline
\end{longtable}
\item $A=\widehat{A}$ with the exceptions: $A/\widehat{A}$ has order $2$ if
    $S$ is $A_l(q)$ ($l\ge 2$),  $D_l(q)$  ($l\ge 5$) or $E_6(q)$, or if
    $S$ is $B_2(q)$ or $F_4(q)$ and $q=2^{2n+1}$, or if  $S$ is $G_2(q)$
    and $q=3^{2n+1}$; $A/\widehat{A}$ is isomorphic to $S_3$ if $S$ is
    $D_4(q)$.
\end{enumerate}
\end{lem}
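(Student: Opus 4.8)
The plan is to reconstruct Steinberg's analysis of $\Aut(S)$ from the realization of $S$ as the fixed points of a Steinberg endomorphism on a simple algebraic group. First I would fix a simple, simply-connected algebraic group $\mathbf{G}$ over $\overline{\F}_p$ of the appropriate Dynkin type together with a Steinberg endomorphism $F$, so that $S\cong\mathbf{G}_{\mathrm{sc}}^F/Z(\mathbf{G}_{\mathrm{sc}}^F)$ (with the usual $O^{p'}$ correction in the finitely many degenerate small cases). The four classical families of automorphisms then receive concrete models: inner automorphisms come from $S$; diagonal automorphisms from the action of $\mathbf{G}_{\mathrm{ad}}^F$; field automorphisms from the standard Frobenius $\rho\colon(a_{ij})\mapsto(a_{ij}^p)$, generating the cyclic group $\Phi_S=\langle\rho\rangle$; and graph automorphisms from symmetries of the Dynkin diagram. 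The engine of the whole argument is Steinberg's generation theorem (3.2 in \cite{Steinberg}, already invoked just before the lemma), which says $A=\Aut(S)$ is generated by these four types; I would take this as input and extract the normal-series structure from it.

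For parts (a) and (b) the key is to show that the inner-diagonal group $\widehat{S}=\mathbf{G}_{\mathrm{ad}}^F$ is normal in $A$ and to compute $\widehat{S}/S$. Normality I would obtain by checking that field and graph automorphisms normalize the set of diagonal automorphisms: conjugating a diagonal automorphism by $\rho$ or by a diagram symmetry again yields an inner-diagonal one, since both operations respect the maximal torus and the root datum. The quotient $\widehat{S}/S$ I would identify with the cokernel of the natural map $\mathbf{G}_{\mathrm{sc}}^F\to\mathbf{G}_{\mathrm{ad}}^F$, which by the standard exact sequence is controlled by $Z(\mathbf{G}_{\mathrm{sc}})^F$; this is abelian, giving the abelianness claimed at once. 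Its order equals $|Z(\mathbf{G}_{\mathrm{sc}})^F|$, and running through the types reproduces Table~\ref{D}: for $A_l$ the center is $\mu_{l+1}$ with $F$-fixed part $\gcd(l+1,q-1)$ (untwisted) or $\gcd(l+1,q+1)$ (twisted), for $D_l$ one gets $\gcd(4,q^l\mp1)$, and so on, the sign $q\mp1$ recording the twist. Finally $\widehat{A}=\widehat{S}\Phi_S$ is normal because graph automorphisms normalize both the field and the diagonal automorphisms.

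For part (c) I would identify $A/\widehat{A}$ with the group of diagram automorphisms modulo those already realized inside $\widehat{A}$. Generically a single nontrivial diagram symmetry of order $2$ survives, accounting for the index-$2$ statement for $A_l$ $(l\ge2)$, $D_l$ $(l\ge5)$, and $E_6$; for the twisted forms ${}^2A_l$, ${}^2D_l$, ${}^2E_6$ this symmetry is absorbed into the Frobenius twist and contributes nothing. The triality of the $D_4$ diagram gives $S_3$. The subtle cases are the exceptional graph automorphisms in small characteristic: $B_2(q)$ and $F_4(q)$ with $p=2$, and $G_2(q)$ with $p=3$, where a special isogeny of $\mathbf{G}$ (not an honest diagram automorphism) produces an extra automorphism of order $2$ exactly when $q$ is an odd power of $p$ — the same mechanism underlying the Suzuki and Ree groups.

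The main obstacle lives entirely in the arithmetic and case analysis, not in any conceptual step. First, pinning down $|\widehat{S}/S|$ requires a careful computation of the $F$-action on $Z(\mathbf{G}_{\mathrm{sc}})$, which is what forces the $q-1$ versus $q+1$ versus $q^l\pm1$ distinctions and the $\gcd$ with $l+1$, $2$, $3$, or $4$. Second, verifying that the exceptional automorphisms in the $B_2,F_4$ (char $2$) and $G_2$ (char $3$) cases appear precisely for odd powers of $p$, and that together with $D_4$-triality they account for the whole quotient $A/\widehat{A}$, needs a hands-on examination of the special isogenies and their commutation with $F$. Since the statement is quoted verbatim from \cite[3.3, 3.4, 3.6]{Steinberg}, in the paper itself one simply cites Steinberg; the outline above is what a self-contained proof would have to carry out.
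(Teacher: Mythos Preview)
Your proposal is correct and you have already identified the key point yourself: the paper gives no proof of this lemma at all, simply citing \cite[3.3, 3.4, 3.6]{Steinberg} as the source. Your outline is a faithful reconstruction of Steinberg's argument and would serve as a self-contained proof, but for the purposes of comparison with the paper there is nothing to compare.
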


Below we need an information about maximal subgroups of groups of Lie type.
For classical groups we use the Aschbacher theorem
\cite[Theorem~1.2.1]{KleiLie} and for the information about subgroups lying
in the Aschbacher classes we refer to \cite{KleiLie}. Maximal subgroups of
exceptional groups of Lie type are specified in Lemmas \ref{max_excep} and
\ref{max_2F4}.

Let $G$ be a finite exceptional simple group of Lie type over $\F_q$, where
$q=p^a$.  Then by \cite{Steinberg1968} there is a simple adjoint algebraic
group $\overline{G}$ over the algebraic closure of $\F_q$\,, and a surjective
endomorphism $\sigma$  of $\overline{G}$ such that
$G=O^{p'}(\overline{G}_\sigma)$, the subgroup of $\overline{G}_\sigma$
generated by all its $p$-elements.

\begin{lem}\label{max_excep} {\em \cite[Theorem 2]{LieSeitz}}
Let $G=O^{p'}(\overline{G}_\sigma)$~be a finite exceptional group of Lie
type, $G_1$  is chosen so that $G\le G_1 \le \Aut(G)$, and let $M$ be a
maximal subgroup of $G_1$ such that $G\not\leq M$. Then either $F^*(M)$ is
simple, or one of the following holds.

    \begin{enumerate}[{\em(a)}]
    \item $M=N_{G_1}(D_\sigma)$, where $D$ is a $\sigma$-stable closed
        connected subgroup and $D$~is either parabolic or reductive of
        maximal rank.
    \item $M=N_{G_1}(E)$, where $E$ is an elementary abelian $s$-subgroup
        with $s$ prime and $E\le \overline{G}_\sigma$; the pair $(G, E)$ is
        as in {\em Table~\ref{Table2}}, in each case $s\neq p$.
    \item $M$ is the centralizer of a graph, field, or graph-field
        automorphism of $G$ of prime order.
    \item $\overline{G}=E_8$, $p>5$ and
        $F^*(M)\in\{{\PSL_2(5)\times\PSL_2(9)},{\PSL_2(5)\times
        \PSL_2(q)}\}$.
    \item $F^*(M)$ is as in   {\em Table \ref{Table3}}.
    \end{enumerate}

 {\small
\begin{longtable}{cccc}
\caption{ } \label{Table2} \\ \hline $G$ & $E$ &$N_{\overline{G}_\sigma}(E)$
& \mbox{ \em Conditions }\\ \hline
$G_2(p)$ & $2^3$ & $ 2^3 \arbitraryext \SL_3(2)$ & \\
${}^2G_2(3)'$ & $2^3$ & $2^3\arbitraryext 7$ & \\
$F_4(p)$ & $3^3$ & $3^3\arbitraryext \SL_3(3)$ & $p\ge 5$\\
$E_6^\eta(p)$ & $3^3$ & $3^{3+3}\arbitraryext \SL_3(3)$ &  $p \equiv \eta
\pmod{3}, p\ge 5$\\
$E_7(q)$ & $2^2$ &$(2^2\times (\P\Omega^+_8(q)\arbitraryext 2^2))
\arbitraryext
S_3$ & $\P\Omega^+_8(q)\arbitraryext 2^2= \widehat{D_4}(q)$ \\
$E_8(p)$ & $2^5$ & $2^{5+10}\arbitraryext \SL_5(2)$ &  \\
$E_8(p^a)$ & $5^3$ & $5^{3}\arbitraryext \SL_3(5)$ & $p\neq2, 5;\, a=\left\{
\begin{array}{l}
1,  \text{ if } 5 \mid p^2-1\\
2,  \mbox{ if   } 5 \mid  p^2+1 .
\end{array}
\right.$ \\
${}^2E_6(2)$ & $3^2$ & $N_G(E)=(3^2\arbitraryext \ldbrack 8 \rdbrack)\times
(\PSU_3(3)\arbitraryext 2)$ & \\
$E_7(3)$ & $2^2$ & $N_G(E)=(2^2:3)\times F_4(3)$ & \\
 \hline
\end{longtable} }
\begin{longtable}{cl}
\caption{\label{Table3} } \\ \hline
$G$ & $F^*(M)$\\ \hline
$F_4(q)$ &  $\PSL_2(q)\times G_2(q)$ $(p\ge 3, q\ge 5)$\\
$E_6^\pm(q)$ & $\PSL_3(q)\times G_2(q)$, $\PSU_3(q)\times G_2(q)$ $(q\ge 3)$\\
$E_7(q)$ &  $\PSL_2(q)\times \PSL_2(q)$ $(p \ge 5)$, $\PSL_2(q)\times G_2(q)$
$(p\ge 3, q\ge 5)$, \\
                  &  $\PSL_2(q)\times F_4(q)$ $(q \ge 4)$, $G_2(q)\times
\PSp_6(q)$\\
$E_8(q)$ &  $\PSL_2(q)\times \PSL_3^\pm(q)$ $(p\ge 5)$, $\PSL_2(q)\times
G_2(q^2)$ $(p\ge 3, q\ge 5)$, \\
 & $G_2(q)\times F_4(q)$, $\PSL_2(q)\times G_2(q)\times G_2(q)$ $(p\ge 3, q\ge
5)$ \\ \hline
\end{longtable}

\end{lem}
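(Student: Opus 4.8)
Lemma~\ref{max_excep} is the Liebeck--Seitz reduction theorem, and the proof I would reproduce is the one of \cite[Theorem~2]{LieSeitz}; what follows is a sketch of its architecture rather than its (very long) details. Everything takes place inside the ambient adjoint simple algebraic group $\overline{G}$ over $\overline{\F_p}$ carrying the Steinberg endomorphism $\sigma$ with $G=O^{p'}(\overline{G}_\sigma)$. Given a maximal subgroup $M$ of $G_1$ with $G\not\le M$, the basic object is the Zariski closure $\overline{M}=\overline{\langle M\rangle}$, a $\sigma$-stable closed subgroup of $\overline{G}$; the whole argument is organised according to the structure of $\overline{M}$ and, when $\overline{M}$ is finite, according to the structure of the generalized Fitting subgroup $F^*(M)$.

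First I would treat the case $\dim\overline{M}>0$. Replacing $\overline{M}$ by a maximal $\sigma$-stable positive-dimensional closed overgroup, maximality of $M$ yields $M=N_{G_1}(\overline{M}_\sigma)$. Here the crucial input is the companion classification of maximal closed subgroups of positive dimension in exceptional algebraic groups: each is parabolic, or reductive of maximal rank, or lies in a short explicit list of ``large'' reductive subgroups. Descending to fixed points and taking normalizers turns these into case~(a), into the $E_8$-exceptions of case~(d), and into the Table~\ref{Table3} subgroups of case~(e). The fixed-point subgroups of graph and graph--field endomorphisms (which may themselves be positive-dimensional, as with $F_4\le E_6$) likewise fall out here and contribute to case~(c).

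When $\overline{M}$ is finite, $M$ is a finite subgroup not contained in any proper positive-dimensional $\sigma$-stable subgroup. The field-automorphism centralizers, namely the subfield subgroups $G(q_0)$ with $q=q_0^t$, account for the remaining part of case~(c). Otherwise I would examine $F^*(M)$: either it is an elementary abelian $s$-group with $s\ne p$, and a finite determination of which such subgroups have maximal normalizer produces the exotic local subgroups of Table~\ref{Table2} (case~(b)), or $F^*(M)$ is a nonabelian simple group $L$. The dichotomy here rests on the fact that a minimal normal subgroup of $M$ acts on the Lie algebra of $\overline{G}$, so that its order and its possible isomorphism types are severely restricted by representation-theoretic dimension bounds.

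The main obstacle is exactly the residual almost simple case $F^*(M)=L$ simple. One must show that every finite simple $L$ admitting a nontrivial embedding into $\overline{G}$ with maximal normalizer is either a group of the same type over a subfield (already accounted for) or one of a bounded explicit list; this demands the classification of finite simple groups together with delicate Lie- and representation-theoretic estimates: minimal faithful dimensions, analysis of the composition factors of the restricted adjoint and minimal modules, and Lang--Steinberg descent from $\overline{G}_\sigma$ to $G$. This is the long technical core of \cite{LieSeitz}, and for the purposes of the present paper it is enough to cite their Theorem~2.
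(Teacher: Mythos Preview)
Your proposal is entirely consistent with the paper's treatment: the paper does not prove Lemma~\ref{max_excep} at all, but simply quotes it verbatim from \cite[Theorem~2]{LieSeitz} as an input to the main argument. Your sketch of the Liebeck--Seitz architecture (positive-dimensional closure versus finite $\overline{M}$, then the $F^*(M)$ analysis) is accurate and in fact more than the paper itself offers, so there is nothing to correct.
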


To simplify our proof of Theorem 1 we need a list of maximal subgroups
of~${}^2F_4(q)$.

\begin{lem}\label{max_2F4}{\em \cite[Main Theorem]{Malle1991}}
Every maximal subgroup of $G={}^2F_4(q)$, $q=2^{2m+1}$, $m\geq1$, is
isomorphic to one of the following.
\begin{enumerate}[{\em(a)}]
\item $\ld q^{11}\rd\splitext (A_1(q)\times (q-1))$.
\item $\ld q^{10}\rd\splitext ({}^2B_2(q)\times (q-1))$.
\item $\SU_3(q)\splitext 2$.
\item $((q+1)\times (q+1))\splitext \GL_2(3)$.
\item $((q-\sqrt{2q}+1)\times (q-\sqrt{2q}+1))\splitext \ld 96\rd$ if
    $q>8$.
\item $((q+\sqrt{2q}+1)\times (q+\sqrt{2q}+1))\splitext \ld 96\rd$.
\item $(q^2-\sqrt{2q}q+q-\sqrt{2q}+1)\splitext 12$.
\item $(q^2+\sqrt{2q}q+q+\sqrt{2q}+1)\splitext 12$.
\item $\PGU_3(q)\splitext 2$.
\item ${}^2B_2(q) \wr 2$.
\item $B_2(q)\splitext 2$.
\item ${}^2F_4(q_0)$, if $q_0=2^{2k+1}$ with $(2m+1)/(2k+1)$~prime.
\end{enumerate}
\end{lem}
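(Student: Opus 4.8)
The plan is to recognize the statement as a complete classification of the maximal subgroups of the Ree group $G={}^2F_4(q)$ with $q=2^{2m+1}$, and to obtain it by the standard programme for maximal subgroups of finite exceptional groups of Lie type, specialized to this group (which is the route Malle takes). The natural starting point is a reduction theorem of Liebeck--Seitz type, close in spirit to Lemma~\ref{max_excep}: any maximal subgroup $M$ of $G$ either lies in one of a few structured families --- parabolic subgroups, reductive subgroups of maximal rank, normalizers of maximal tori, and centralizers of automorphisms (in particular subfield subgroups) --- or else $M$ is almost simple and $F^*(M)$ is a simple group that must be pinned down separately. Writing $G=O^{p'}(\overline{G}_\sigma)$ as in the excerpt, with $\sigma$ the twisting endomorphism of the $F_4$ algebraic group in characteristic $2$, each family is then analysed via the $\sigma$-stable subgroups of $\overline{G}$.

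First I would determine the parabolic subgroups. The twisted Lie rank of ${}^2F_4(q)$ is $2$, so there are exactly two classes of maximal parabolics, coming from the two maximal $\sigma$-stable parabolics of $F_4$. Computing their Levi decompositions and unipotent radicals yields items~(a) and~(b), whose Levi factors involve $A_1(q)$ and the Suzuki group ${}^2B_2(q)$, respectively, with unipotent radicals of the indicated orders $\ld q^{11}\rd$ and $\ld q^{10}\rd$.

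Next I would enumerate the reductive subgroups of maximal rank, namely $M=N_G(D_\sigma)$ for $\sigma$-stable reductive $D$ of maximal rank, governed by the $\sigma$-stable subsystems of the $F_4$ root system and the action of the twist on a maximal torus. This step produces the subsystem subgroup $B_2(q)\splitext 2=\Sp_4(q)\splitext 2$ (item~(k)), the subgroups $\SU_3(q)\splitext 2$ and $\PGU_3(q)\splitext 2$ (items~(c),~(i)), and the wreath product ${}^2B_2(q)\wr 2$ (item~(j)). The torus normalizers are read off from the factorization of $|G|$ into the cyclotomic-type factors $q-1$, $q+1$, $q\pm\sqrt{2q}+1$, and $q^2\pm\sqrt{2q}\,q+q\pm\sqrt{2q}+1$ (here $\sqrt{2q}=2^{m+1}$ is an integer), giving items~(d)--(h); the precise complements $\GL_2(3)$, $\ld 96\rd$ and $12$ come from computing the relative Weyl group $N_G(T)/T$ of each such torus. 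Finally the subfield subgroups ${}^2F_4(q_0)$ with $(2m+1)/(2k+1)$ prime give item~(l).

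The hard part will be completeness: after exhibiting the structured families, one must rule out any further maximal $M$, i.e. almost simple subgroups whose socle is not already listed. For ${}^2F_4(q)$ this is where the genuine difficulty lies --- one bounds the order and the possible composition factors of such an $M$, exploits the small twisted rank and the restricted structure of centralizers of semisimple elements, and combines known lower bounds for cross-characteristic representation degrees with the nonexistence of suitable embeddings to eliminate all remaining candidates. This case analysis, carried out in detail by Malle, is what makes the classification nontrivial; the families giving (a)--(l) are comparatively routine once the reduction theorem is available.
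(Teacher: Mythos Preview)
The paper does not prove this lemma at all: it is stated purely as a citation of Malle's Main Theorem \cite{Malle1991}, with no argument given. Your sketch is a reasonable outline of the standard programme (and indeed of Malle's approach), but for the purposes of comparing against the paper's own proof there is nothing to compare --- the authors simply import the result.
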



\section{Proof of the main theorem}

In view of Lemma \ref{2inpi}, we may assume that $2\notin \pi$.  By Lemma
\ref{max} it is sufficient to prove the following statement
 \begin{equation}\label{ms}
  \begin{split}
    \text{in each simple non}& \text{abelian $\D_\pi$-group $G$, } \\
              \text{all maximal subgroups containing }& \text{a $\pi$-Hall
subgroup of $G$ satisfy~$\D_\pi$.}
 \end{split}
 \end{equation}
  Statement \eqref{ms} is true for alternating and sporadic simple groups, and
simple groups of Lie type, if the characteristic $p$ lies in $\pi$, by Lemma
\ref{pinpi}. Thus we remain to consider simple groups of Lie type in
characteristic $p$ with $p\notin \pi$. So we assume that $G$~is a simple
$\D_\pi$-group of Lie type with the base field  $\F_q$ of characteristic $p$
and
 $2, p\not \in \pi$.

 In view of the Sylow theorems, we suppose that $|\pi\cap\pi(G)|\ge 2$. So  $G$
satisfies one of Conditions~II-IV of Lemma~\ref{Dpi}.

 Throughout this section, let $H$~be a $\pi$-Hall subgroup of $G$, $M$ a maximal
subgroup of $G$ with  $H\le M$. Clearly, $H$ is a $\pi$-Hall subgroup of $M$,
in particular $M\in \E_\pi$. We proof \eqref{ms} if we show that $M$
satisfies $\D_\pi$.

  Assume by contradiction that $M$ does not satisfy $\D_\pi$.  Since $M\in
\E_\pi$, Lemma~\ref{L_Epi}(d)
  implies that every composition factor of $M$ satisfies  $\E_\pi$. Since
$M\notin \D_\pi$, Lemma~\ref{L_Epi}(\ref{Dpi_comp_crit}) implies
   that $M$ has a nonabelian composition factor $S\in \E_\pi\setminus \D_\pi$.

Recall that $r=\min({\pi\cap \pi(G)})$ and
$\tau=(\pi\cap\pi(G))\setminus\{r\}$. We proceed in a series of steps.

\bigskip
{\bf Step 1. }The following statements hold.
\begin{enumerate}[(a)]
\item $|\pi\cap\pi(S)|\ge 2 $;
\item $r\in \pi(S)$;
\item $(S,\pi)$ satisfies \eqref{*};
\item $S\simeq \PSL^\eta_{n_1}(q_1)$ for some $q_1$, $n_1$ and $\eta$; $S$
    satisfies one of items \ref{L_epi-dpi_2B_a}-\ref{L_epi-dpi_2B_c} of
    Lemma~\ref{epi-dpi_lie}. In particular, $\tau\cap\pi(S)\subseteq
    \pi(q_1-\eta)$,
 and $t>n_1$ for every $t\in \tau\cap\pi(S)$;
\item $r\notin \pi(q_1-\eta)$, $\gcd(n_1, q_1-\eta)_\pi=1$ and $r\le n_1
    \le r(r-2)$:
\item $\Out(S)$ is an $r'$-group.
\end{enumerate}

\noindent{\bf Note:} As we mentioned in Remark on Lemma \ref{epi-dpi_lie},
$S$ cannot be isomorphic to an orthogonal or sympletic group.

\bigskip

(a) If $|\pi\cap\pi(S)|\le 1$ then $S\in \D_\pi$ by the Sylow theorems.
Consequently, $|\pi\cap\pi(S)|\ge 2 $.

 (b), (c) Since $G$ satisfies~$\D_\pi$, every $\pi$-subgroup of~$G$ is contained
in a $\pi$-Hall subgroup of~$G$.
 It follows from Lemma \ref{t.2.1} that $(G, \pi)$ satisfies \eqref{*}, i.\,e.
every $\pi$-subgroup of $G$ has a
 normal abelian $\tau$-Hall subgroup. Lemma~\ref{L_Epi}(\ref{existpi}) implies
that every  $\pi$-subgroup of~$S$
 is a homomorphic image of a $\pi$-subgroup of~$M$ and hence of~$G$. Thus every
$\pi$-subgroup of~$S$ possesses
 a normal abelian $\tau$-Hall subgroup, in particular, a $\pi$-Hall subgroup of
$S$ possesses a normal abelian $\tau$-Hall subgroup.

   If $r\notin \pi(S)$ then a $\pi$-Hall subgroup of~$S$ is abelian. So by
Lemma~\ref{L_Epi}(\ref{dpi_nil}) we have
   $S\in \D_\pi$,  a contradiction.  Therefore, we conclude $r\in \pi(S)$, as
required. So $r=\min(\pi\cap\pi(S))$
   and $(\pi\cap\pi(S))\setminus \{r\}=\tau\cap \pi(S)$. Since every
$\pi$-subgroup of~$S$ possesses a normal abelian
   $\tau$-Hall subgroup, we obtain that $(S,\pi)$ satisfies \eqref{*}.

 (d) Since $2\notin \pi$ and $S\in \E_\pi\setminus \D_\pi$, the possibilities
for $S$ are determined in
 Lemma~\ref{epi-dpi_lie}.

 Suppose that $S$ satisfies item (I) of
Lemma~\ref{epi-dpi_lie}. Then $S\simeq O'N$ and
 $\pi\cap\pi(S)=\{3,5\}$.  But a $\{3,5\}$-Hall subgroup of $O'N$ does not
possess a normal Sylow 5-subgroup (see proof
  of \cite[Theorem 6.14]{Gross1986}), hence $(S,\pi)$ does not satisfy \eqref{*}
and this case is impossible.

  Consequently, $S$~is a group of Lie type with a base field $\F_{q_1}$ of a
characteristic~$p_1$.
  Assume first that $S$ satisfies item II(A) of Lemma~\ref{epi-dpi_lie}, and so
$p_1 \in \pi$. If $p_1\neq r$ then
  $r\in (\pi\cap\pi(S))\setminus\{p_1\}$ and $r$ does not divide $|W(S)|$. Since
$\pi(|W(S)|)=\pi(l!)$ for some natural~$l$,
  we obtain that $l<r<p_1$ and it contradicts the fact that $p_1$ divides
$|W(S)|$.  Suppose now that $p_1=r$.
  Denote by $U$ a Sylow $p_1$-subgroup of $S$. In view of
\cite[Theorem~3.2]{Gross1986}, a Borel subgroup $B=N_S(U)$
  contains a $\pi$-Hall subgroup $H_0$ of $S$. Since a $\tau$-Hall subgroup $Q$
of $H_0$ is normal (in $H_0$), we obtain
  that   $H_0=U\times Q$. So $H_0$ is nilpotent, and $S\in \D_\pi$ by
Lemma~\ref{L_Epi}(\ref{dpi_nil}), a contradiction.

   Hence  $S$ satisfies item II(B) of Lemma~\ref{epi-dpi_lie}, in particular,
$p_1\notin \pi$. If $S$ satisfies one of
   items  \ref{L_epi-dpi_2B_d}-\ref{L_epi-dpi_2B_i}, then
$\pi\cap\pi(S)\subseteq \pi(q_1\pm 1)$, and therefore
   $e(q_1,t)=e(q_1,s)$ for every  $t,s \in \pi\cap\pi(S)$. Recall that $2,p_1
\notin \pi$ and $S\in \E_\pi$.
   Now $(S,\pi)$ satisfies \eqref{*} by Step~1(c), so Lemma~\ref{dpi_not2}
implies that $S$ satisfies $\D_\pi$,  a contradiction.
   Thus $S$ satisfies one of items  \ref{L_epi-dpi_2B_a}-\ref{L_epi-dpi_2B_c} of
Lemma~\ref{epi-dpi_lie}, in particular, $$S\simeq \PSL_{n_1}^\eta(q_1)$$ for
some $q_1$, $n_1$ and $\eta$.

Now the rest of statement (d)  follows from items
\ref{L_epi-dpi_2B_a}-\ref{L_epi-dpi_2B_c} of Lemma~\ref{epi-dpi_lie}.

   (e) The statements follow from Lemma~\ref{nn}.

   (f) In view of (e) and Lemma~\ref{automorphisms}, it is sufficient to prove
that $|\Phi_S|$ is an $r'$-group.
   If $r$ divides $|\Phi_S|$ then $q_1=q_0^r$ for some
   $q_0$ and this equality  contradicts the conclusion $(q_1^{r-1}-1)_r=r$ in
\ref{L_epi-dpi_2B_a}-\ref{L_epi-dpi_2B_c} of Lemma~\ref{epi-dpi_lie}.

   Indeed,  suppose that  $S \simeq \PSU_{n_1}(q_1)$ and $r\equiv 1\pmod 4$ or
$S \simeq \PSL_{n_1}(q_1)$ i.\,e. $S$ satisfies
   \ref{L_epi-dpi_2B_a}-\ref{L_epi-dpi_2B_b} of Lemma~\ref{epi-dpi_lie}.
   Under these conditions $e(q_1,r)=r-1$. Since $q_1^i-1$ is divisible by
$q_0^i-1$ for every $i$, we conclude that
   $e(q_0,r)=r-1$. Now

$$q_0^{r-1}-1=\left(q_0^{\frac{r-1}{2}}-1\right)\left(q_0^{\frac{r-1}{2}}
+1\right)$$
   implies that
   $q_0^{(r-1)/2}+1$ is divisible by $r$, i.\,e.  $q_0^{(r-1)/2}\equiv -1\pmod
r$. Therefore
   $$\sum_{i=0}^{r-1}(-1)^{r-1-i}q_0^{\frac{r-1}{2}i}\equiv r\pmod r$$
   and we obtain that

   $$
q_1^{r-1}-1=\left(q_1^{\frac{r-1}{2}}-1\right)\left(q_0^{\frac{r-1}{2}r}
+1\right)=\left(q_1^{\frac{r-1}{2}}-1\right)\left(q_0^{\frac{r-1}{2}}+1\right)
   \left(\sum_{i=0}^{r-1}(-1)^{r-1-i}q_0^{\frac{r-1}{2}i}\right)
   $$
   is divisible by $r^2$; a contradiction.

    Now, suppose that  $S \simeq \PSU_{n_1}(q_1)$ and $r\equiv 3\pmod 4$, i.\,e.
$S$ satisfies
   \ref{L_epi-dpi_2B_c} of Lemma~\ref{epi-dpi_lie}. Then  $e(q_1,r)=(r-1)/2$ and
$e(q_0,r)=(r-1)/2$. This implies that
     $q_0^{(r-1)/2}\equiv 1\pmod r$ and
    $$\sum_{i=0}^{r-1}q_0^{\frac{r-1}{2}i}\equiv r\pmod r.$$
    Hence
    $$
q_1^{r-1}-1=\left(q_1^{\frac{r-1}{2}}+1\right)\left(q_0^{\frac{r-1}{2}r}
-1\right)=\left(q_1^{\frac{r-1}{2}}+1\right)\left(q_0^{\frac{r-1}{2}}-1\right)
   \left(\sum_{i=0}^{r-1}q_0^{\frac{r-1}{2}i}\right)
   $$
   is divisible by $r^2$; a contradiction again.

\bigskip

{\bf Step 2.}   $M$ is not  almost simple.

\bigskip

Assume that $M$ is an almost simple group. Therefore, $S$ is a unique
nonabelian composition factor of $M$ and we may assume that $S\le M \le
\Aut(S)$. Since $M$ contains a $\pi$-Hall subgroup  $H$ of $G$, we arrive at
a contradiction with $G\in \D_\pi$ if we find a $\pi$-subgroup of $M$ (and
hence of $G$) which is not isomorphic to any subgroup of $H$.

In order to prove Step 2, first, for every $t\in\tau\cap \pi(S)$, we estimate
$m_t(C)$, where $C$ is the centralizer in $H$ of a Sylow $r$-subgroup of $H$
(and of both $M$ and~$G$, of course) and, second, we find  an elementary
abelian $t$-subgroup $E$ of~$M$, which centralizes a Sylow $r$-subgroup $R_0$
of $H$ and whose rank is greater than $m_t(C)$. It is clear that $ER_0$ is
not isomorphic to any subgroup of~$H$. In particular, $ER_0$ is not conjugate
in $G$ to any subgroup of~$H$.

Consider the group $\GL_{n_1}^\eta(q_1)$ first. Recall that
$$\GL_{n_1}^-(q_1)=\{(a_{ij})\in
\GL_{n}(q_1^2)\mid (a_{ij}^{q_1}) = ((a_{ij})^{-1})^{\top}\},$$ where
$(a_{ij})^{\top}=(a_{ji})$ is the transposed of $(a_{ij})$. Let $\varphi$ be
an automorphism of $\GL_{n_1}^\eta(q_1)$ defined by $\varphi:(a_{ij})\mapsto
(a_{ij}^{p_1})$, where $p_1$ is the characteristic of~$\F_{q_1}$. Let $T$ be
a $\pi$-Hall subgroup of the subgroup of all diagonal matrices in
$\GL_{n_1}^\eta(q_1)$, and $R$ a Sylow $r$-subgroup of the subgroup of
permutation matrices of  $\GL_{n_1}^\eta(q_1)$. Denote by $\chi$ the
$\pi$-part of $\varphi$. Then by Lemma~\ref{hall-gl} we have that $TR$ is a
$\pi$-Hall subgroup of $\GL_{n_1}^\eta(q_1)$  and $H_1=TR \langle \chi
\rangle$ is a $\pi$-Hall subgroup of $\GL_{n_1}^\eta(q_1) \langle
\varphi\rangle$.

Now consider the natural homomorphism
$$\overline{\phantom{G}}:\GL_{n_1}^\eta(q_1)\leftthreetimes \langle
\varphi\rangle \rightarrow B, \text{ where }
B=\GL_{n_1}^\eta(q_1)\leftthreetimes \langle
\varphi\rangle/Z(\GL_{n_1}^\eta(q_1)).$$
Observe that $B$ is isomorphic to $\widehat{S}\Phi_S$, where
$\widehat{S}=\PGL_{n_1}^\eta(q_1)$ and $\Phi_S$ are defined in
Lemma~\ref{automorphisms}. By Lemma~\ref{L_Epi}(\ref{Epi}) we see that
$\overline{TR}$ is a  $\pi$-Hall subgroup of $\widehat{S}$ and
$\overline{H_1}$ is a $\pi$-Hall subgroup of $B$. Since
$|\widehat{S}:S|=\gcd(n_1,q_1-\eta)$ is a $\pi'$-number by Step~1(e), we
obtain $\overline{TR} \le S$ and $\overline{H_1}\cap S=\overline{TR}$ is a
$\pi$-Hall subgroup of $S$. In particular, $\overline{R}$ is a Sylow
$r$-subgroup of $S$.

Note that $H$ is a $\pi$-subgroup of $\Aut(S)$. It follows from
Lemma~\ref{automorphisms}  that $|\Aut(S)/B|\in\{1,2\}$. Since $2\notin \pi$,
we conclude that $H$ is contained in $B$. Since $H$ is a $\pi$-Hall subgroup
of $M$, we have that $H\cap S$ is a $\pi$-Hall subgroup of $S$.
Lemma~\ref{L_Epi}(\ref{epi=cpi}) yields that $S\in \C_\pi$. Therefore we may
assume that $H\cap S$ and $\overline{TR}$ coincide.

Step 1(f) implies that every Sylow $r$-subgroup of $S$ is a Sylow
$r$-subgroup of $\Aut(S)$. In particular, $\overline{R}$ is a Sylow
$r$-subgroup of $H$ and $H/(H\cap S)$ is a $\tau$-group. Recall that $r$ does
not divide $q_1-\eta$ by Step 1(e). Therefore, $\gcd(|R|,
|Z(\GL_{n_1}^\eta(q_1))|)=1$  and $R\simeq \overline{R}$. It now follows from
\cite[3.28]{Isaacs} that
$C_{\overline{T}}(\overline{R})=\overline{C_{T}({R})}$. Thus,
$$C_{H\cap
S}(\overline{R})=C_{\overline{TR}}(\overline{R})=\overline{R}C_{\overline{T}}
(\overline{R})=\overline{RC_T(R)}=\overline{C_{TR}(R)}.$$
Since $C_{TR}(R)\simeq (q_1-\eta)_\tau^{d+k}\times R$ by
Lemma~\ref{hall-gl}(e), where $d$ and $k$ are defined by $d=[n/r]$ and
$k=n-dr$, we obtain  that $\overline{C_{TR}(R)}\simeq
(q_1-\eta)_\tau^{d+k-1}\times R$ and $m_t(C_{H\cap S}(\overline{R}))=d+k-1$
for every $t\in \tau\cap \pi(S)$.

As we have seen above, $|\widehat{S}: S|$  is a $\pi'$-number. Thus
$$H/(H\cap S)=H/(H\cap \widehat{S})\simeq H\widehat{S}/\widehat{S}\le
B/\widehat{S}\simeq \langle\varphi\rangle$$
 and $H/(H\cap S)$ is cyclic. Therefore,  if $t\in\tau\cap \pi(S)$, then  $$m_t(C_H(\overline{R}))-m_t(C_{H\cap
S}(\overline{R}))\leq 1$$ and  $m_t(C_H(\overline{R}))$ is equal to either
$d+k-1$ or $d+k$.
 The Sylow theorems imply that
the same statement holds for the centralizer in $H$ of an arbitrary Sylow
$r$-subgroup of $H$.

Take some  $t\in \tau\cap \pi(S)$. As we have noted above, we complete Step 2
if we find a subgroup $E$ in $HS\le M$ such that $E$ is an elementary abelian
$t$-group of rank greater than $m_t(C_{H}(\overline{R}))$ and $E$ centralizes
a Sylow $r$-subgroup $R_0$ of~$HS$.

Lemma~\ref{elab-gl} implies that there is a subgroup $R_1\times K$ in
$\GL_{n_1}^\eta(q_1)$ such that $R_1$ is a Sylow $r$-subgroup of
$\GL_{n_1}^\eta(q_1)$ centralized by $ \varphi_{2'}$ and $K$ is a
$\varphi$-invariant elementary abelian $t$-subgroup
 of rank $2d+k$.
 A subgroup $\overline{K}$ is
 an elementary abelian $t$-subgroup of
$\widehat{S}=\overline{\GL_{n_1}^\eta(q_1)}$. Since  $|\widehat{S}: S|$  is a
$\pi'$-number, we conclude that
 $\overline{K}\le S$.

 If $m_t(C_H(\overline{R}))$ equals $d+k-1$, then  $E=\overline{K}$ is a desired
subgroup. Indeed, the rank of $\overline{K}$ is equal to $2d+k-1$ and is
greater than $d+k-1$, since $d=[n_1/r]>0$ in view of Step~1(e). Moreover,
$\overline{K}$  centralizes the Sylow $r$-subgroup $R_0=\overline{R_1}$
of~both
 $S$ and~$HS$.

If  $m_t(C_H(\overline{R}))$ equals $d+k$,
 then $|C_{H}(\overline{R})/C_{H\cap {S}}(\overline{R})|>1$ and
$C_H(\overline{R})$ contains an element $h$ of order $t$ such that $h\not\in
{S}$.  Moreover, $h\not\in \widehat{S}$,
 since  $|\widehat{S}: S|$  is a $\pi'$-number.
 In view of \cite[(7-2)]{GorLyons1983}
 we obtain that $\langle h\rangle=\langle\overline\psi\rangle^\delta$ where
 $\psi\in\langle\varphi\rangle$ is of order~$t$ and $\delta$ is an element in
$\widehat{S}$.
 By Lemma~\ref{elab-gl}(\ref{elab-gl_c}), we can assume that $K$ is centralized
by $\psi$.  The subgroup $E=\overline{\langle K, \psi\rangle}^\delta=
 \langle \overline{K}^\delta, \overline{\psi}^\delta\rangle$
 is an elementary abelian $t$-subgroup of $HS$. The rank of $E$ is equal to
$2d+k$ and $E$ centralizes
 the Sylow $r$-subgroup $R_0=\overline{R_1}^\delta$ of~both
 $S$ and~$HS$. So, $E$ is a desired subgroup.  This completes the proof of
Step~2.

\bigskip

{\bf Step 3.} $G$ is not a classical group.
\bigskip

  Assume that $G$ is a classical group, and so  $G$ satisfies either
Condition~II or Condition~III of Lemma~\ref{Dpi}. If $G$ satisfies either
item (g) or item (h) of Condition~II, then a $\pi$-Hall subgroup $H$ of $G$
is cyclic. Since $H\le M$, it follows from Lemma~\ref{L_Epi}(\ref{dpi_nil})
that $M$ satisfies $\D_\pi$, a contradiction. Therefore, $G$ satisfies either
Condition~III or one of items (a)-(f) of Condition~II, in particular
$e(q,t)=e(q,s)$ for every $t,s\in \tau$.

Set $$a=e(q,r) \mbox{ and } b=e(q,t) \mbox{ for every }t\in \tau.$$ Since $M$
is not almost simple by Step 2, the famous Aschbacher's theorem~\cite{Asch}
implies that  $M$ belongs to one of Aschbacher's classes
$\mathcal{C}_1-\mathcal{C}_8$. The structure of  members  of Aschbacher's
classes is specified in \cite{KleiLie}. Recall that by Step~1(b,d) $M$
possesses a composition factor $S\simeq \PSL_{n_1}^\eta(q_1)$, $r\in \pi(S)$
and $e(q_1,t)=e(q_1,s)$ for every $t,s\in \tau\cap\pi(S)$.  Set
$$a_1=e(q_1,r) \mbox{ and } b_1=e(q_1, t) \mbox{ for every }t\in \tau \cap
\pi(S).$$ Assume  that $q_1=q$. Then $a_1=a$ and $b_1=b$. Since $S$ satisfies
one of items \ref{L_epi-dpi_2B_a}-\ref{L_epi-dpi_2B_c} of
Lemma~\ref{epi-dpi_lie} by Step 1(d), we have that $a\neq b$ and $b\le 2$.
Consequently, $G$ cannot satisfy Condition~III, and so one of items (a)-(f)
of Condition~II holds for $G$. This implies that $b\ge r>2$, a contradiction.
Thus we conclude that $q_1\neq q$.

   We now consider Aschbacher's classes to specify all possibilities for $M$ to
have a composition factor $S$
   isomorphic to $\PSL_{n_1}^\eta(q_1)$ with $q_1\neq q$ (recall that $S$ cannot
be isomorphic to orthogonal or sympletic groups).
  The structure of members of Aschbacher's
   classes $\mathcal{C}_1-\mathcal{C}_8$  is  presented in
\cite[Chapter 4]{KleiLie}.
   By using this information, we check below that, in every case when $M$ is
an element of corresponding Aschbacher's
   class $\mathcal{C}_1-\mathcal{C}_8$, there is at most one such possibility
for $M$.

  \begin{itemize}
    \item[$\mathcal{C}_1$:] The structure of members of $\mathcal{C}_1$ is
        presented in \cite[\S4.1]{KleiLie}. The unique possibility for $M$
        appears in
    \cite[Proposition 4.1.18]{KleiLie}:

          (a) $G=\PSU_n(q)$, $M\simeq  \left\ldbrack q^{m(2n-3m)}
          \right\rdbrack \splitext \left\ldbrack c/\gcd(q+1,n)
\right\rdbrack\arbitraryext(\PSL_m(q^2)\times\\
          \phantom{(a)}\PSU_{n-2m}(q))\arbitraryext \left\ldbrack d
\right\rdbrack$,
          where $1\le m \le [n/2]$, \begin{gather*}
c=|\{(\lambda_1,\lambda_2)|\lambda_i\in \F_{q^2},
          \lambda_2^{q+1}=1,\lambda_1^{m(q-1)}\lambda_2^{n-2m}=1\}|,\\
           d=(q^2-1)\gcd(q^2-1,m)\gcd(q+1,n-2m)/c. \end{gather*} In this
           case, $S\simeq \PSL_{n_1}(q_1)$ with $n_1=m$ and $q_1=q^2$.

    \item[$\mathcal{C}_2$:] The structure of members of $\mathcal{C}_2$ is
        presented in \cite[\S4.2]{KleiLie}.
    The unique possibility for $M$ appears in \cite[Proposition
    4.2.4]{KleiLie}:

         (b) $G=\PSU_n(q)$, $M\simeq \left\ldbrack
\frac{(q-1)\gcd(q+1,\frac{n}{2})}{\gcd(q+1,n)}\right\rdbrack \arbitraryext
         \PSL_{n/2}(q^2)\arbitraryext \left\ldbrack
\frac{\gcd(q^2-1,\frac{n}{2})}{\gcd(q+1,\frac{n}{2})} \right\rdbrack
\arbitraryext 2 $.

         In this case, $S\simeq \PSL_{n_1}(q_1)$ with $n_1=n/2$ and
         $q_1=q^2$.

    \item[$\mathcal{C}_3$:] The structure of members of $\mathcal{C}_3$ is
        presented in
\cite[\S4.3]{KleiLie}. The unique possibility for $M$ appears in
    \cite[Proposition 4.3.6]{KleiLie}:

        (c) $G=\PSL_n^\eta(q)$, $M\simeq c   \arbitraryext \PSL_m^\eta(q^u)
\arbitraryext  d  \arbitraryext u$, where $n=mu$,
        $u$~is prime $\phantom{(ci)}$(if $\eta=-, u\ge 3$),
$c=\frac{\gcd(q-\eta,m)(q^u-\eta)}{(q-\eta)\gcd(q-\eta,n)}$,
        ${d=\frac{\gcd(q^u-\eta,m)}{\gcd(q-\eta,m)}}$.

        In this case, $S\simeq \PSL^\eta_{n_1}(q_1)$ with $n_1=m$,
$q_1=q^u$ and $\eta$ is the
        same for $G$ and $S$.

    \item[$\mathcal{C}_4, \mathcal{C}_7$:] The structure of members of
        $\mathcal{C}_4$ and $\mathcal{C}_7$
    presented in \cite[\S4.4 and \S4.7]{KleiLie} implies that if a
composition factor of members of $\mathcal{C}_4$ or  $\mathcal{C}_7$
    is isomorphic to $\PSL_{n_1}^\eta(q_1)$ then $q_1= q$.

    \item[$\mathcal{C}_5$:] The structure of members of $\mathcal{C}_5$ is
        presented in \cite[\S4.5]{KleiLie}.
    The unique possibility for $M$ appears in \cite[Proposition
    4.5.3]{KleiLie}:

        (d) $G=\PSL_n^\eta(q)$, $M$ is a normal subgroup in
$\PGL_n^\eta(q_1)$ of index $\frac{\lcm\left(
q_1-\eta,\frac{q-\eta}{\gcd(q-\eta,n)}\right) \gcd(q-\eta,n)}{q-\eta}$,
        where ${q=q_1^u}$, $u$~is prime and $u\ge 3$ if $\eta=-$.

        In this case, $S\simeq \PSL^\eta_{n_1}(q_1)$ with $n_1=n$ and
$\eta$ is the same for $G$ and~$S$.

    \item[$\mathcal{C}_6$:] The structure of members of $\mathcal{C}_6$
        presented in \cite[\S4.6]{KleiLie} implies that if a composition
        factor of members of $\mathcal{C}_6$
    is isomorphic to $\PSL_{n_1}^\eta(q_1)$ then $q_1= q$.

    \item[$\mathcal{C}_8$:] The structure of members of $\mathcal{C}_8$ is
        presented in \cite[\S4.8]{KleiLie}.
   The unique possibility for $M$ appears in \cite[Proposition
   4.8.5]{KleiLie}:

        (e) $G=\PSL_n(q)$, $M\simeq \PSU_n(q_1)\arbitraryext \left\ldbrack
        \frac{\gcd(q_1+1,n)c}{\gcd(q-1,n)}\right\rdbrack$, where
${q=q_1^2}$ and $c=\frac{q-1}{\lcm\left(
        q_0+1,\frac{q-1}{\gcd(q-1,n)}\right)}$.

        In this case, $S\simeq \PSL^-_{n_1}(q_1)$ with $n_1=n$.
     \end{itemize}

     In cases (d) and (e), $M$ is almost simple and we exclude them in view of
Step~2.

 Now we exclude the remaining cases (a)--(c). Recall some statements from Step~1
 which hold for $S$.

 \medskip
\begin{tabularx}{350pt}{XX}\hline
\multicolumn{1}{c}{$S\simeq \PSL_{n_1}(q_1)$}& \multicolumn{1}{c}{$S\simeq
\PSU_{n_1}(q_1)$}\\ \hline
\multicolumn{1}{c}{$a_1=r-1$}&   $r\equiv 1 \pmod{4}$ and $a_1=r-1$, \\
& or $r\equiv 3 \pmod{4}$ and $a_1=\frac{r-1}{2}$\\
\multicolumn{1}{c}{$b_1=1$ }& \multicolumn{1}{c}{$b_1=2$}\\
\multicolumn{2}{c}{$(q_1^{r-1}-1)_r=r$} \\
\multicolumn{2}{c}{$r\le n_1 \le r(r-2)$}\\ \hline
\end{tabularx}

\bigskip
If $S\simeq \PSL_{n_1}(q_1)$, we see that $a_1$ is even and $a_1>1$. If
$S\simeq \PSU_{n_1}(q_1)$, we see that either $a_1\equiv 0 \pmod{4}$, or
$a_1\equiv 1 \pmod{2}$. So, in the case where $S$ is unitary, $a_1$ cannot
equal $2k$ with $k$~odd, in particular, $a_1\neq 2$.

In the rest of Step~3, we fix some $t\in\tau\cap \pi(S)$.
\medskip

 {\it Cases} (a) and (b). In these cases
 $S\simeq \PSL_{n_1}(q^2)$. Since $e(q^2,t)=b_1=1$, we conclude $b=e(q,t)$ is
equal to 1 or 2.
 This implies that $G$ cannot satisfy Condition~II, and so $G$ satisfies
Condition~III; in particular $a=b$.
 Therefore, $a=e(q,r)$ equals 1 or 2, and $a_1=e(q^2,r)=1$, which is a
contradiction with the fact that $a_1>1$.

 {\it Case} (c). In this case $S\simeq \PSL_m^\eta(q^u)$, where $mu=n$ and
$u$~is prime (if $\eta=-$, $u\ge 3$).
 Show that $u=r$ and, in particular, $r$ divides~$n$.

 Assume first that $\eta=+$, i.~e. $G=\PSL_n(q)$ and $S\simeq \PSL_m(q^u)$.
Since $a_1=e(q^u,r)>1$, we have that $a=e(q,r)\geq a_1>1$
 and $a\neq u$. Since $b_1=e(q^u,t)=1$, we obtain that
$b=e(q,t)=\gcd(b,u)b_1=\gcd(b,u)$ divides $u$.
 Therefore, $b$ is equal to 1 or~$u$. Hence $G$ cannot satisfy Condition~III
where $a=b$, since $a$ cannot equal 1 or $u$.
 Consequently, $G$ satisfies either item~(a) or item~(b) of Condition~II, and
$b=r=u$.

 Assume now that $\eta=-$, i.~e. $G=\PSU_n(q)$ and $S\simeq \PSU_m(q^u)$.
 Since
 $$
 a=\gcd(a,u)a_1\quad\text{ and }\quad a_1\not\equiv 2\pmod 4,
 $$
 we have that $a\not\equiv 2\pmod 4$. In particular $a\ne 2$ and $a\ne 2u$
(recall that $u\ge 3$ is prime in this case).
  It follows from $b=\gcd(b,u)b_1$ and $b_1=2$ that $b$ is equal to 2 or $2u$.
Consequently, $a\ne b$ and
  $G$ satisfies one of items (c)-(f) of Condition~II. This implies $b=2r=2u$.

Thus we have that $r=u$, and so $r$ divides $n$. Therefore, $G$ cannot
satisfy items~(b), (e) and (f) of Condition~II, where $n \equiv -1 \pmod{r}$.
Hence $G$ satisfies one of items~(a), (c) or (d) of Condition~II. Now it
follows that
$$\left[\frac{n}{r-1}\right]=\left[\frac{n}{r}\right]=\frac{n}{r}=m.$$ These
equalities yield that
$$n=mr=m(r-1)+m$$ and $m<r-1$, which is a contradiction with the fact that
$m=n_1\ge r$.

 \bigskip

Thus in all cases (a)-(e) we obtain a contradiction, and so $G$ cannot be a
classical group, as wanted. To prove the statement \eqref{ms} it remains to
show that  $G$ cannot be an exceptional group.

 \bigskip
{\bf Step 4.}  $G$ is not an exceptional group.
 \bigskip

 Assume that $G$ is an exceptional group, and so $G$ satisfies either
Condition~III or Condition~IV of Lemma~\ref{Dpi}. The description of
$\pi$-Hall subgroups in the exceptional groups in characteristic $p$ with
$2,p\notin \pi$ is given in \cite{VdovinRevin2002}. Recall that by Step~1 $M$
possesses a composition factor $S\simeq \PSL_{n_1}^\eta(q_1)$, $r\le n_1\le
r(r-2)$ and $|\tau\cap \pi(S)|\geq 1$. Also if $\eta=+$ then $e(q_1,t)=1$,
and if $\eta=-$ then $e(q_1,t)=2$ for every $t\in \tau\cap\pi(S)$, in
particular, $q_1\ge 4$.

 Suppose $G$ satisfies Condition~III first. So $G$ is  not isomorphic
to ${}^2B_2(q)$, ${}^2G_2(q)$ or ${}^2F_4(q)'$. Hence by \cite[Lemmas
7-13]{VdovinRevin2002} we have that a $\pi$-Hall subgroup $H$ of $G$ is
abelian or $\pi\cap\pi(G)\subseteq \pi(q\pm1)$. If $H$ is abelian then $M$
satisfies $\D_\pi$ by Lemma~\ref{L_Epi}(\ref{dpi_nil}), a contradiction.
Consequently  $\pi\cap\pi(G)\subseteq \pi(q\pm1)$. If $q_1=q^u$ for some
natural $u$, it follows from $\pi\cap\pi(S)\subseteq\pi(q\pm 1)$ that
$\pi\cap\pi(S)\subseteq\pi(q^u\pm 1)=\pi(q_1\pm 1)$, and so
$e(q_1,t)=e(q_1,s)$ for every $t,s \in \pi\cap\pi(S)$. Since $(S,\pi)$
satisfies \eqref{*} by Step~1(c), Lemma~\ref{dpi_not2} implies that $S$
satisfies $\D_\pi$, a contradiction. Thus we conclude that $q_1$ is not a
power of $q$.

 Let $\overline{G}$ be a adjoint simple algebraic
group and $\sigma$ a surjective endomorphism of $\overline{G}$ such that
$G=O^{p'}(\overline{G}_\sigma)$. All maximal subgroups of $G$ are presented
in Lemma~\ref{max_excep}. Since $M$ is not almost simple, we consider all
possibilities for $M$ according to items (a)-(e) of Lemma~\ref{max_excep}.

\medskip
 {\it Case} (a):  $M=N_G(D_\sigma)$, where $D$ is a $\sigma$-stable closed
connected subgroup and $D$ is either parabolic or
 reductive subgroup of maximal rank. If $D$ is parabolic, then there are no
composition factors of $M$ isomorphic to
 $\PSL_{n_1}^\eta(q_1)$ with $q_1\neq q^u$. If $D$ is reductive subgroup of
maximal rank, then $M$ is a subgroup of maximal rank  in sense
of~\cite{LieSaxlSeitz1992}. Since $G\unlhd \overline{G}_\sigma$, we have that
$S$ is a composition factor of $N_{\overline{G}_\sigma}(D_\sigma)$.
 According to Tables 5.1 and 5.2 from \cite{LieSaxlSeitz1992}, we obtain that
$S$ is isomorphic to one of the following groups $\PSL_2(5)$, $\PSL_3(2)$ or
$\PSU_4(2)$.
 If $S\simeq \PSL_2(5)$ then $n_1=2$, which is a contradiction with the fact
that $n_1\ge r>2$. If $S$ is isomorphic to $\PSL_3(2)$ or $\PSU_4(2)$ then
$q_1=2$ and it  contradicts the fact $q_1\ge 4$.

\medskip
 {\it Case} (b): $M=N_G(E)$, where $E$ is an elementary abelian $s$-subgroup
with $s$ prime and $E\le \overline{G}_\sigma$ (see Table 2).
 Since $G\unlhd \overline{G}_\sigma$, we have that $S$ is a composition factor of
$N_{\overline{G}_\sigma}(E)$.
 According to Table~\ref{Table2}, we obtain that $S$ is isomorphic to one of the
following groups $\PSL_3(2)$, $\PSL_3(3)$, $\PSU_3(3)$, $\PSL_3(5)$ or
$\PSL_5(2)$.
 If $S$ is isomorphic to $\PSL_3(2)$, $\PSL_3(3)$, $\PSU_3(3)$ or $\PSL_5(2)$,
then $q_1<4$, a contradiction.
 If $S\simeq \PSL_3(5)$, then there is no odd prime $t$ with $e(q_1,t)=1$, and
so $\tau\cap\pi(S)=\varnothing$,
 which is a contradiction with the fact that $|\tau\cap\pi(S)|\geq 1$.

 \medskip
 {\it Case} (c): $M$ is the centralizer of a graph, field, or graph-field
automorphism of $G$ of prime order. The structure of $M$ is presented
 in \cite[Theorem 4.5.1, Theorem 4.7.3, Propositions 4.9.1 and 4.9.2]{CFSG}.
 As we mentioned in Remark on Lemma \ref{epi-dpi_lie}, $S$ cannot be isomorphic
to an orthogonal or sympletic group. So we see that there are no centralizers
of a graph, field, or graph-field automorphism of $G$ of prime order with a
composition factor isomorphic to $\PSL_{n_1}^\eta(q_1)$ where $q_1\neq q^u$.

  \medskip
 {\it Cases} (d) and (e): either $\overline{G}=E_8$, $p>5$ and $F^*(M)$ is one
of groups $\PSL_2(5)\times\PSL_2(9)$, $\PSL_2(5)\times \PSL_2(q)$ or
 $F^*(M)$ is as in    Table \ref{Table3}. The maximality and the structure of
$M$
 implies that $M$ is a subgroup of $\Aut(F^*(M))$, in particular, $M/F^*(M)$ is
solvable. Hence case (d) holds and $S$ is isomorphic to one of the following
groups
 $\PSL_2(5)$, $\PSL_2(9)$. Now we have $n_1=2$, a contradiction with the fact
that $n_1\ge r>2$.

 Thus in all cases (a)-(e) we obtain a contradiction, and so $G$ cannot satisfy
Condition~III.
 Consequently, we conclude that $G$ satisfies Condition~IV, and $G$ is
isomorphic to one of groups ${}^2B_2(q)$, ${}^2G_2(q)$ or ${}^2F_4(q)'$.
 Since $2\notin \pi$, in view of \cite[6.13 Corollary]{Gross1986} ${}^2F_4(2)'$
does not satisfy $\E_\pi$, and therefore $G$ cannot be isomorphic
 to ${}^2F_4(2)'$. Since ${}^2F_4(q)'={}^2F_4(q)$ with $q>2$, further we write
${}^2F_4(q)$ instead of ${}^2F_4(q)'$.

  By  \cite[Lemma 14]{VdovinRevin2002}, if $G$ is isomorphic to ${}^2B_2(q)$ or
${}^2G_2(q)$, or if $G$ is isomorphic
  to ${}^2F_4(q)$ and  $3\notin \pi$, then $H$ is abelian. Since $H\le M$, it
now follows form Lemma~\ref{L_Epi}(\ref{dpi_nil})
  that $M$ satisfies $\D_\pi$, a contradiction. Consequently, we deduce that $G$
is isomorphic to ${}^2F_4(q)$ and $3\in\pi$,
  and therefore $r=3$.  All maximal subgroups of $G$ are specified in
Lemma~\ref{max_2F4}. Since $M$ has a composition factor
  $S\simeq \PSL_{n_1}^\eta(q_1)$ and $r\le n_1 \le r(r-2)$, we obtain that
$n_1=3$ and $S\simeq\PSU_3(q_1)$ with $q_1=q=2^{2m+1}$.
  By Step 1(d) $S$ satisfies \ref{L_epi-dpi_2B_c} of Lemma~\ref{epi-dpi_lie}. In
particular, $$e(q_1,r)=\frac{r-1}{2}=1.$$
  But  $q_1=2^{2m+1} \equiv -1 \pmod 3.$ So we obtain a contradiction with
  the fact that $e(q_1,3)=e(q_1,r)=1$.

  Thus we conclude that $G$ cannot be an exceptional group, and so the main
theorem is proved.

\section*{References}



\end{document}